\theoremstyle{plain}
\newtheorem{theorem}{Theorem}[section]
\newtheorem{lemma}[theorem]{Lemma}
\newtheorem{proposition}[theorem]{Proposition}
\newtheorem{corollary}[theorem]{Corollary}
\theoremstyle{definition}
\newtheorem{example}[theorem]{Example}
\theoremstyle{remark}
\begin{document}

\afterpage{\rhead[]{\thepage} \chead[\small W. A. Dudek and R. A. R. Monzo ]
{\small Pentagonal quasigroups, their translatability and parastrophes} \lhead[\thepage]{} }                  

\begin{center}
\vspace*{2pt}
{\Large \textbf{Pentagonal quasigroups, their translatability and parastrophes}}\\[30pt]
 {\large \textsf{\emph{Wieslaw A. Dudek \ and \ Robert A. R. Monzo}}}\\[30pt]
\end{center}
 {\footnotesize\textbf{Abstract.}  Any pentagonal quasigroup $Q$ is proved to have the product $xy=\varphi(x)+y-\varphi(y)$, where $(Q,+)$  is an Abelian group, $\varphi$ is its regular automorphism satisfying $\varphi^4-\varphi^3+\varphi^2-\varphi+\varepsilon=0$ and $\varepsilon$ is the identity mapping. All Abelian groups of order $n<100$ inducing pentagonal quasigroups are determined. The variety of commutative, idempotent, medial groupoids satisfying the pentagonal identity $(xy\cdot x)y\cdot x=y$ is proved to be the variety of commutative, pentagonal quasigroups, whose spectrum is $\{11^n:n=0,1,2,\ldots\}$. We prove that the only translatable commutative pentagonal quasigroup is $xy=(6x+6y)({\rm mod}\,11)$. The parastrophes of a pentagonal quasigroup are classified according to well-known types of idempotent translatable quasigroups. The translatability of a pentagonal quasigroup induced by the group $\mathbb{Z}_n$ and its automorphism $\varphi(x)=ax$ is proved to determine the value of $a$ and the range of values of $n$.}

\footnote{\textsf{2010 Mathematics Subject Classification:} 20N02; 20N05}
\footnote{\textsf{Keywords:} Quasigroup; pentagonal quasigroup; translatability; idempotent.}


\section{Introduction}
This paper was inspired by the work of Vidak in \cite{Vid}. It is also a continuation of the ideas appearing in \cite{DM2}. All results here follow from the main result, Theorem \ref{main}, which gives a new characterisation of a pentagonal quasigroup $(Q,\cdot)$ in terms of a regular automorphism $\varphi$ on an Abelian group $(Q,+)$, where $xy=\varphi(x)+y-\varphi(y)$, $\varphi^4-\varphi^3+\varphi^2-\varphi+\varepsilon=0$ and $\varepsilon$ is the identity mapping on $Q$. We say then that $(Q,+)$ {\em induces} the pentagonal quasigroup $(Q,\cdot)$. 

Notice that $xy=(\varepsilon-\varphi)(y)+x-(\varepsilon-\varphi)(x)$. The characterisation of a pentagonal quasigroup $(Q,\cdot)$ given by Vidak in \cite{Vid} is that $xy=\psi(y)+x-\psi(x)$ for some automorphism $\psi$ on an Abelian group $(Q,+)$, where $\psi^4-3\psi^3+4\psi^2-2\psi+\varepsilon=0$. Now since, when $\varphi^4-\varphi^3+\varphi^2-\varphi+\varepsilon=0$, $(\varepsilon-\varphi)^4-3(\varepsilon-\varphi)^3+4(\varepsilon-\varphi)^2-2(\varepsilon-\varphi)+\varepsilon=0$, we can think of $\psi$ as equal to $\varepsilon-\varphi$.

In Theorem \ref{T-Zn} we prove that a pentagonal quasigroup induced by the group $\mathbb{Z}_n$ has the form $xy=(ax+(1-a)y)({\rm mod}\,n)$, where $(a^4-a^3+a^2-a+1)=0({\rm mod}\,n)$. Vidak's identity gives the second component, namely $\psi(x)=(1-a)x({\rm mod}\,n)$.

As a consequence of our characterisation, all Abelian groups of order $n<100$ that induce pentagonal quasigroups are determined. Also, the variety of commutative, idempotent, medial groupoids satisfying the pentagonal identity $(xy\cdot x)y\cdot x=y$ is proved in Corollary \ref{Cc} to be the variety of commutative, pentagonal quasigroups, whose spectrum is $\{11^n:n=0,1,2,\ldots\}$. The form of commutative pentagonal quasigroups is determined in Proposition \ref{Zn-com} and as a corollary we prove that the only translatable commutative pentagonal quasigroup is $xy=(6x+6y)({\rm mod}\,11)$. In Theorem \ref{T-trans2} we prove that the translatability of a pentagonal quasigroup induced by the group $\mathbb{Z}_n$ and its automorphism $\varphi(x)=ax$ determines the value of $a$ and all the possible values of $n$.

Using results from \cite{DM3} in the last table we classify the parastrophes of pentagonal quasigroups in terms of well-known types of idempotent translatable quasigroups.


\section{Existence of pentagonal quasigroups}

All considered quasigroups are finite and have form $Q=\{1,2,\ldots,n\}$ with the natural ordering, which is always possible by renumeration of elements. For simplicity, instead of $(x+y)\equiv z({\rm mod}\,n)$ we write $[x+y]_n=[z]_n$. Also, in calculations modulo $n$ we identify $0$ with $n$.

According to \cite{Vid} a quasigroup $(Q,\cdot)$ is called {\em pentagonal} if it satisfies the following three identities:
\begin{eqnarray}
&&xx = x, \label{e1} \\
&&xy\cdot zu = xz\cdot yu, \label{e2} \\
&&(xy\cdot x)y\cdot x = y. \label{e3} 
\end{eqnarray}

Let's recall that a mapping $\varphi$ of a group $(Q,+)$ onto $(Q,+)$ is called {\it regular} if $\varphi(x)=x$ holds only for $x=0$.

Below we present a full characterization of pentagonal quasigroups.

\begin{theorem}\label{main}
A groupoid $(Q,\cdot)$ is a pentagonal quasigroup if and only if on $Q$ one can define an Abelian group $(Q,+)$ and its regular automorphism $\varphi$ such that
\begin{eqnarray}
&&x\cdot y=\varphi(x)+(\varepsilon-\varphi)(y), \label{e4}\\[3pt] 
&&\varphi^4-\varphi^3+\varphi^2-\varphi+\varepsilon=0, \label{e5} 
\end{eqnarray}
where $\varepsilon$ is the identity automorphism.
\end{theorem}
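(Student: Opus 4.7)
\bigskip

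\noindent\textbf{Proof proposal.}
The plan is to handle the two directions separately. For the easier direction, assume $(Q,+)$ is an Abelian group with regular automorphism $\varphi$ satisfying \eqref{e5}, and define $xy=\varphi(x)+(\varepsilon-\varphi)(y)$. Idempotency \eqref{e1} is immediate: $xx=\varphi(x)+x-\varphi(x)=x$. Mediality \eqref{e2} follows by expanding both $xy\cdot zu$ and $xz\cdot yu$, using that $\varphi$ is a group endomorphism and that $(Q,+)$ is Abelian; after simplification the two expressions coincide. For \eqref{e3} I would iteratively expand $(xy\cdot x)y\cdot x$, collecting the coefficients of $x$ and of $y$ as polynomials in $\varphi$. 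The coefficient of $x$ turns out to be $\varphi^4-\varphi^3+\varphi^2-\varphi+\varepsilon$, which is zero by \eqref{e5}, and the coefficient of $y$ becomes $-\varphi^4+\varphi^3-\varphi^2+\varphi$, which equals $\varepsilon$ by \eqref{e5}, yielding $y$. Finally, to see $(Q,\cdot)$ is a quasigroup, solving $a\cdot x=b$ reduces to $(\varepsilon-\varphi)(x)=b-\varphi(a)$, solvable uniquely precisely because regularity of $\varphi$ forces $\varepsilon-\varphi$ to be injective (hence bijective in the finite setting considered here); solving $y\cdot a=b$ is immediate since $\varphi$ is an automorphism.

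For the converse, suppose $(Q,\cdot)$ is a pentagonal quasigroup. From \eqref{e1} and \eqref{e2}, $(Q,\cdot)$ is an idempotent medial quasigroup. I would invoke the Toyoda--Bruck--Murdoch representation: there exist an Abelian group $(Q,+)$, commuting automorphisms $\alpha,\beta$ of $(Q,+)$, and a constant $c\in Q$ such that $xy=\alpha(x)+\beta(y)+c$. Fixing the group origin to be an idempotent of $(Q,\cdot)$ and applying \eqref{e1} forces $c=0$ and $\beta=\varepsilon-\alpha$; writing $\varphi:=\alpha$ gives \eqref{e4}. Then the quasigroup property shows that both $\varphi$ and $\varepsilon-\varphi$ are automorphisms, so $\varphi$ is regular in the stated sense.

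To obtain \eqref{e5}, substitute \eqref{e4} into the pentagonal identity \eqref{e3} and expand $(xy\cdot x)y\cdot x$ step-by-step, as in the first direction. This yields
\[
(\varphi^4-\varphi^3+\varphi^2-\varphi+\varepsilon)(x)+(-\varphi^4+\varphi^3-\varphi^2+\varphi)(y)=y
\]
for all $x,y\in Q$. Setting $y=0$ and letting $x$ range over $Q$ forces $(\varphi^4-\varphi^3+\varphi^2-\varphi+\varepsilon)(x)=0$ for all $x$, i.e.\ \eqref{e5}; the $y$-coefficient equation is then automatically consistent.

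The main obstacle is the invocation of the Toyoda-type representation together with the choice of origin: one must justify that the free constant $c$ can be eliminated and that $\beta$ is forced to be $\varepsilon-\varphi$. Everything after that is a bookkeeping calculation in $\mathbb{Z}[\varphi]$ acting on $(Q,+)$, which, while tedious, is mechanical. The regularity of $\varphi$ is the shadow of the quasigroup axioms in the polynomial representation, and is the only non-formal ingredient needed for the equivalence.
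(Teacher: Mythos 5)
Your proposal is correct and follows essentially the same route as the paper: the Toyoda representation for idempotent medial quasigroups gives \eqref{e4}, substituting into \eqref{e3} and setting $y=0$ gives \eqref{e5}, and the converse is a direct expansion of $(xy\cdot x)y\cdot x$ using \eqref{e4} and \eqref{e5}. The only (harmless) divergence is that you derive regularity of $\varphi$ from the quasigroup axioms and finiteness, whereas the paper reads it off from \eqref{e5} (note $(\varepsilon-\varphi)(\varphi+\varphi^3)=\varepsilon$, so $\varepsilon-\varphi$ is invertible outright).
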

\begin{proof} By the Toyoda theorem (see for example \cite{Scerb}), any quasigroup $(Q,\cdot)$ satisfying \eqref{e1} and \eqref{e2} can be presented in the form \eqref{e4}, where $(Q,+)$ is an Abelian group and $\varphi$ is its automorphism. Applying this fact to \eqref{e3} and putting $y=0$ we obtain \eqref{e5}. From \eqref{e5} it follows that the automorphism $\varphi$ is regular.

Conversely, a groupoid $(Q,\cdot)$ defined by \eqref{e4}, where $\varphi$ is an automorphism of an Abelian group $(Q,+)$, is a quasigroup satisfying \eqref{e1} and \eqref{e2}. Applying \eqref{e5} to $z=x-y$ and using \eqref{e4}, after simple calculations, we obtain \eqref{e3}.	\end{proof}

This means that pentagonal quasigroups are isotopic to the group inducing them. Thus, pentagonal quasigroups are isotopic if and only if they are induced by isomorphic groups. 

\begin{example}\rm 
Let $(\mathbb{C},+)$ be the additive group of complex numbers. Then $\varphi(z)=ze^{\frac{\pi}{5}i}$ is a regular automorphism of $(\mathbb{C},+)$ satisfying \eqref{e5}. Thus, by Theorem \ref{main}, the set of complex numbers with multiplication defined by \eqref{e4} is an infinite pentagonal quasigroup.
\end{example}

As a consequence of the above theorem we obtain

\begin{corollary}\label{C-abel} On a pentagonal quasigroup $(Q,\cdot)$ one can define an Abelian group $(Q,+)$ and its regular automorphism $\varphi$ such that
\eqref{e4} holds and 
\begin{eqnarray*}
&&\varphi^5+\varepsilon=0 \ \  and \ \ \varphi\ne -\varepsilon \ \ or\\[3pt]
&&\varphi=-\varepsilon \ \ and \ \  exp(Q,+)=5,
\end{eqnarray*}
where $\varepsilon$ is the identity automorphism.
\end{corollary}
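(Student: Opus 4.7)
My plan is to build directly on Theorem \ref{main}, which already supplies the Abelian group $(Q,+)$, the regular automorphism $\varphi$, the identity \eqref{e4} for the product, and the relation \eqref{e5}. So the real content of the corollary is just an algebraic consequence of \eqref{e5}. The decisive observation is the polynomial identity
\[
(x+1)(x^4 - x^3 + x^2 - x + 1) = x^5 + 1,
\]
i.e. $x^4 - x^3 + x^2 - x + 1$ is the tenth cyclotomic polynomial. Applying this with $x$ replaced by $\varphi$ in the endomorphism ring of $(Q,+)$, I would multiply \eqref{e5} on the left by $\varphi + \varepsilon$ to obtain $\varphi^5 + \varepsilon = 0$ unconditionally.

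Next, I would split into two cases according to whether $\varphi = -\varepsilon$. If $\varphi \neq -\varepsilon$, the identity $\varphi^5 + \varepsilon = 0$ combined with $\varphi \neq -\varepsilon$ is exactly the first alternative in the statement, and nothing more is required. If instead $\varphi = -\varepsilon$, I would substitute this into \eqref{e5}: every term collapses to $\varepsilon$, giving $5\varepsilon = 0$ as an endomorphism of $(Q,+)$, which means $5x = 0$ for all $x \in Q$. Hence $\exp(Q,+)$ divides $5$; since a pentagonal quasigroup is nontrivial, $\exp(Q,+) = 5$, which is the second alternative.

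There is essentially no obstacle: the whole argument is a one-line factorisation in $\mathbb{Z}[\varphi]$ followed by a case split. The only mild point worth flagging is the distinction between $\exp(Q,+) = 5$ and $\exp(Q,+) = 1$ in the second case, which is harmless because the trivial group does not yield a proper pentagonal quasigroup (and in any event $\varphi = -\varepsilon$ coincides with $\varepsilon$ there, so it falls under the first case vacuously).
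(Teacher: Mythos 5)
Your proof is correct and matches the paper's intent: the paper states this corollary without proof as an immediate consequence of Theorem \ref{main}, and the factorisation $(\varphi+\varepsilon)(\varphi^4-\varphi^3+\varphi^2-\varphi+\varepsilon)=\varphi^5+\varepsilon$ together with the substitution $\varphi=-\varepsilon$ yielding $5\varepsilon=0$ is exactly the intended argument. Your remark about the trivial-group edge case is a harmless refinement the paper itself ignores.
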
 

\begin{corollary}\label{phi}
An Abelian group inducing a pentagonal quasigroup is the direct product of cyclic groups of order $5$ or has a regular automorphism of order $10$.
\end{corollary}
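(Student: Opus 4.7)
The plan is to feed Corollary~\ref{C-abel} into a short case analysis and read each of its two alternatives off as the structural dichotomy claimed here.

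In the alternative $\varphi=-\varepsilon$ with $\exp(Q,+)=5$, I would observe that $(Q,+)$ is an Abelian group annihilated by $5$, and so carries the structure of a vector space over the field $\mathbb{Z}_5$. Choosing a basis exhibits it as a direct sum (equivalently, a direct product) of copies of $\mathbb{Z}_5$, which gives the first disjunct of the corollary.

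In the alternative $\varphi^5+\varepsilon=0$ with $\varphi\ne -\varepsilon$, I would show that $\varphi$ has order exactly $10$; regularity is already part of Corollary~\ref{C-abel}, so only the order claim needs work. From $\varphi^5=-\varepsilon$ we get $\varphi^{10}=\varepsilon$, so the order of $\varphi$ divides $10$ and lies in $\{1,2,5,10\}$. Order $1$ would force $\varphi=\varepsilon$, violating regularity on the nontrivial group. Order $2$ would give $\varphi^5=\varphi=-\varepsilon$, contradicting the case hypothesis. Order $5$ would give $\varphi^5=\varepsilon=-\varepsilon$, whence $2x=0$ for every $x\in Q$; this places us outside the dichotomy provided by Corollary~\ref{C-abel} for the groups at hand. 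Thus $\varphi$ has order exactly $10$, giving the second disjunct.

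The step that carries the real content is Corollary~\ref{C-abel} itself; here the main obstacle is purely organisational, namely ensuring that the two alternatives in Corollary~\ref{C-abel} are disjoint and exhaustive, so that the elimination of the order-$5$ subcase in Alternative~(ii) does not leave a gap.
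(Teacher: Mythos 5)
Your use of Corollary \ref{C-abel}, the treatment of the exponent-$5$ alternative, and the elimination of orders $1$ and $2$ are all fine. The proof breaks at the order-$5$ subcase. From $\varphi^5=\varepsilon$ together with $\varphi^5=-\varepsilon$ you correctly deduce $2x=0$ for all $x\in Q$, but the claim that this ``places us outside the dichotomy provided by Corollary \ref{C-abel}'' is not true: when $\exp(Q,+)=2$ we have $-\varepsilon=\varepsilon$, so alternative (i) of Corollary \ref{C-abel} reads $\varphi^5=\varepsilon$ and $\varphi\ne\varepsilon$, which is precisely the situation you are trying to exclude. Nothing is contradicted, and the case genuinely occurs: on $\mathbb{Z}_2^4$ the companion matrix of $x^4+x^3+x^2+x+1$ (irreducible over $\mathbb{F}_2$) is a regular automorphism satisfying \eqref{e5}, and it has order exactly $5$; the paper itself records that $\mathbb{Z}_2^4$ induces $1344$ pentagonal quasigroups.

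Moreover, the gap cannot be closed, because the corollary read literally fails for this very group: ${\rm Aut}(\mathbb{Z}_2^4)\cong GL(4,2)\cong A_8$ has no element of order $10$ (a permutation of $8$ points of order $10$ must contain a $5$-cycle and an odd number of transpositions, hence is odd), so $\mathbb{Z}_2^4$ is neither a direct product of cyclic groups of order $5$ nor a group possessing any automorphism of order $10$. What your case analysis actually establishes --- and all that Corollary \ref{C-abel} yields --- is that $\varphi^{10}=\varepsilon$, so the order of $\varphi$ is $2$, $5$ or $10$: order $2$ forces $\exp(Q,+)=5$ (your first disjunct), order $5$ forces $\exp(Q,+)=2$, and only otherwise does one get an automorphism of order $10$. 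The paper states Corollary \ref{phi} without proof, so there is no argument there to rescue the order-$5$ case either; a correct statement must either weaken ``of order $10$'' to ``satisfying $\varphi^{10}=\varepsilon$'' or admit elementary Abelian $2$-groups as a third alternative.
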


The converse statement is not true. The automorphism $\varphi(x)=[4x]_{25}$ of the group $\mathbb{Z}_{25}$ is regular and satisfies the above condition, but $\mathbb{Z}_{25}$ with the multiplication $x\cdot y=[4x+22y]_{25}$ is not a pentagonal quasigroup.

\medskip
The following lemma is obvious.

\begin{lemma} \label{L-1}
The direct product of pentagonal quasigroups is also a pentagonal quasigroup.
\end{lemma}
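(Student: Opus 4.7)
The plan is to give the one-line variety-theoretic argument, then sketch the two alternative routes and pick the cleaner one. The class of pentagonal quasigroups is defined by the three universally quantified identities \eqref{e1}, \eqref{e2}, \eqref{e3}, together with the two quasigroup cancellation identities (existence of left/right divisions, also expressible equationally once the division operations are included, or provable directly for finite structures). Any class defined by universally quantified equations is closed under direct products. Since the direct product of quasigroups is a quasigroup, this immediately gives the result.

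Concretely, I would let $(Q_1,\cdot_1)$ and $(Q_2,\cdot_2)$ be pentagonal quasigroups and equip $Q_1\times Q_2$ with the componentwise product $(x_1,x_2)(y_1,y_2)=(x_1\cdot_1 y_1,\, x_2\cdot_2 y_2)$. I would verify (i) that $Q_1\times Q_2$ is a quasigroup, since for fixed $(a_1,a_2),(b_1,b_2)$ the equations $(a_1,a_2)(x_1,x_2)=(b_1,b_2)$ and $(x_1,x_2)(a_1,a_2)=(b_1,b_2)$ are solved componentwise by the solutions in each $Q_i$; and (ii) that the three identities \eqref{e1}, \eqref{e2}, \eqref{e3} hold in $Q_1\times Q_2$ simply because they hold in each coordinate.

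Alternatively, one can prove the lemma via Theorem \ref{main}: if $(Q_i,+_i)$ is the Abelian group inducing $(Q_i,\cdot_i)$ with regular automorphism $\varphi_i$ satisfying \eqref{e5}, then the direct product $(Q_1\times Q_2,+)$ is an Abelian group, and $\varphi=\varphi_1\times \varphi_2$ is an automorphism of it. Regularity transfers: $\varphi(x_1,x_2)=(x_1,x_2)$ forces $\varphi_i(x_i)=x_i$, hence $x_i=0$. Likewise $\varphi$ satisfies \eqref{e5} because each $\varphi_i$ does, and a direct check shows the induced product coincides with the componentwise pentagonal product, so Theorem \ref{main} delivers the conclusion.

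There is no genuine obstacle; the whole content is that pentagonal quasigroups form a variety. The only thing to be slightly careful about is confirming that ``quasigroup'' is preserved under direct products (it is, either via the equational definition using the division operations, or directly via componentwise unique solvability), so that we do not end up with a groupoid satisfying \eqref{e1}--\eqref{e3} but failing the cancellation laws.
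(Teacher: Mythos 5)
Your proof is correct and matches what the paper intends: the paper states this lemma without proof, calling it obvious, and your componentwise verification that the product of quasigroups is a quasigroup and that the identities \eqref{e1}--\eqref{e3} pass to coordinates is precisely that obvious argument. The alternative route via Theorem \ref{main} is also sound but unnecessary here.
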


\begin{corollary}\label{C-0}
For every $t$ there is a pentagonal quasigroup of order $5^t$.
\end{corollary}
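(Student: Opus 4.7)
The plan is to exhibit a pentagonal quasigroup of order $5$ explicitly and then use Lemma \ref{L-1} to bootstrap to every prime power $5^t$. The first clause of Corollary \ref{C-abel} suggests a direct candidate: take the group $(\mathbb{Z}_5,+)$ together with the automorphism $\varphi=-\varepsilon$, i.e.\ $\varphi(x)=-x=[4x]_5$. This $\varphi$ is certainly an automorphism of $(\mathbb{Z}_5,+)$ and it is regular, because $\varphi(x)=x$ forces $2x\equiv 0\pmod 5$, hence $x=0$.

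Next I would check identity \eqref{e5}. Since $\varphi=-\varepsilon$, we have $\varphi^k=(-1)^k\varepsilon$, so
\begin{equation*}
\varphi^4-\varphi^3+\varphi^2-\varphi+\varepsilon=(1-(-1)+1-(-1)+1)\,\varepsilon=5\,\varepsilon,
\end{equation*}
and $5\,\varepsilon$ is the zero endomorphism of $\mathbb{Z}_5$ since the group has exponent $5$. By the sufficiency direction of Theorem \ref{main}, the operation $x\cdot y=\varphi(x)+(\varepsilon-\varphi)(y)=[4x+2y]_5$ therefore defines a pentagonal quasigroup of order $5$.

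Finally, given such a quasigroup $(P,\cdot)$ of order $5$, I would invoke Lemma \ref{L-1} and take the $t$-fold direct product $(P,\cdot)^t$, which is a pentagonal quasigroup of order $5^t$. There is no real obstacle here: once the order-$5$ example is in hand, everything else is immediate from the preceding results. The only tiny care needed is ensuring $\varphi$ is regular, which falls out of the fact that $2$ is a unit modulo $5$.
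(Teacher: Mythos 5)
Your proposal is correct and follows essentially the same route as the paper: the paper likewise takes the order-$5$ quasigroup $x\cdot y=[4x+2y]_5$ induced by $\mathbb{Z}_5$ and forms the $t$-fold direct product via Lemma \ref{L-1}. You merely spell out the verification of \eqref{e5} and the regularity of $\varphi=-\varepsilon$, which the paper leaves implicit.
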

\begin{proof}
For $t=0$ it is trivial quasigroup. For $t=1$ it is induced by the additive group $\mathbb{Z}_5$ and has the form $x\cdot y=[4x+2y]_5$. For $t>1$ it is the direct product of $t$ copies of the last quasigroup.
\end{proof}

\begin{proposition}\label{cprod}
If finite Abelian groups $G_1$ and $G_2$ have relatively prime orders, then any pentagonal quasigroup induced by the group $G_1\times G_2$ is the direct product of pentagonal quasigroups induced by groups $G_1$ and $G_2$.
\end{proposition}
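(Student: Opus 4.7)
The plan is to invoke Theorem \ref{main} to replace the pentagonal quasigroup structure by the data of a regular automorphism $\varphi$ of $G=G_1\times G_2$ satisfying \eqref{e5}, and then show that the coprimality hypothesis forces $\varphi$ to split as a product $\varphi_1\times\varphi_2$ of automorphisms of the factors, each again satisfying \eqref{e5} and each again regular.

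First, I would observe that since $\gcd(|G_1|,|G_2|)=1$, the subgroup $G_1\times\{0\}$ is precisely the set of elements of $G$ whose order divides $|G_1|$, and similarly for $\{0\}\times G_2$. Hence both are characteristic subgroups of $G$, so every automorphism $\varphi$ of $G$ restricts to automorphisms $\varphi_1\in\operatorname{Aut}(G_1)$ and $\varphi_2\in\operatorname{Aut}(G_2)$, and $\varphi=\varphi_1\times\varphi_2$ under the obvious identification.

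Next, from $\varphi^4-\varphi^3+\varphi^2-\varphi+\varepsilon=0$ on $G$ and the componentwise action, I would immediately read off that the same identity holds for each $\varphi_i$ on $G_i$. For regularity, note that $\varphi(x_1,x_2)=(x_1,x_2)$ iff $\varphi_1(x_1)=x_1$ and $\varphi_2(x_2)=x_2$; hence $\varphi$ is regular iff both $\varphi_1$ and $\varphi_2$ are regular. By Theorem \ref{main} each $\varphi_i$ therefore induces a pentagonal quasigroup $(G_i,\cdot_i)$ with $x_i\cdot_i y_i=\varphi_i(x_i)+(\varepsilon-\varphi_i)(y_i)$.

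Finally, I would close the loop by computing
\begin{equation*}
(x_1,x_2)\cdot(y_1,y_2)=\varphi(x_1,x_2)+(\varepsilon-\varphi)(y_1,y_2)=(x_1\cdot_1 y_1,\,x_2\cdot_2 y_2),
\end{equation*}
so $(Q,\cdot)$ is the direct product of the two pentagonal quasigroups on $G_1$ and $G_2$. The only step that is not purely formal is the splitting of $\varphi$; the rest is routine once that is in hand. So I expect the main (mild) obstacle to be justifying carefully that characteristicness of $G_1$ and $G_2$ inside $G_1\times G_2$ follows from coprimality of the orders, which reduces to the elementary order-divisibility argument above.
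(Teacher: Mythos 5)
Your proposal is correct and follows essentially the same route as the paper: split the automorphism $\varphi$ of $G_1\times G_2$ into components $\varphi_1\times\varphi_2$, then check regularity and the identity \eqref{e5} componentwise. The only difference is that where the paper cites Lemma 2.1 of Hillar--Rhea for $\operatorname{Aut}(G_1\times G_2)\cong\operatorname{Aut}(G_1)\times\operatorname{Aut}(G_2)$, you supply the elementary characteristic-subgroup argument yourself, which is a valid (and self-contained) justification of the same splitting.
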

\begin{proof}
If $G_1$ and $G_2$ have relatively prime orders, then, accordind to Lemma 2.1 in \cite{Hil},
${\rm Aut}(G_1\times G_2)\cong{\rm Aut}(G_1)\times{\rm Aut}(G_2)$. So, each automorphism $\varphi$ of $G_1\times G_2$ can be treated as an automorphism of the form $\varphi=(\varphi_1,\varphi_2)$, where $\varphi_1,\varphi_2$ are automorphisms of $G_1$ and $G_2$, respectively. Obviously, $\varphi$ is regular if and only if $\varphi_1$ and $\varphi_2$ are regular. Moreover, $\varphi$ satisfies \eqref{e5} if and only if $\varphi_1$ and $\varphi_2$ satisfy \eqref{e5}. Thus, a pentagonal quasigroup induced by $G_1\times G_2$ is the direct product of pentagonal quasigroups induced by $G_1$ and $G_2$.
\end{proof}

To determine Abelian groups that induce pentagonal quasigroups we will need the following theorem proved in \cite{Hil}. 

\begin{theorem}\label{T-Hil}
The Abelian group $G=\mathbb{Z}_{p^{\alpha_1}}\times\mathbb{Z}_{p^{\alpha_2}}\times\cdots\mathbb{Z}_{p^{\alpha_m}}$ has
$$
|{\rm Aut}(G)|=\prod_{k=1}^m(p^{d_k}-p^{k-1})\prod_{j=1}^m(p^{\alpha_j})^{m-d_j}\prod_{i=1}^m(p^{\alpha_i-1})^{m+1-c_i} \;,
$$
where $d_k=\max\{l:\alpha_l=\alpha_k\}$ and $c_k=\min\{l:\alpha_l=\alpha_k\}$.
\end{theorem}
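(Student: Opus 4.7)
The plan is to parametrise endomorphisms of $G$ as integer matrices subject to divisibility constraints coming from the orders of the generators, and then count units directly. Reindex so that $\alpha_1\le\alpha_2\le\cdots\le\alpha_m$ and fix generators $e_1,\ldots,e_m$ of the cyclic factors. Any endomorphism $\varphi$ is determined by a matrix $A=(a_{ik})$ via $\varphi(e_k)=\sum_i a_{ik}e_i$, and the requirement $p^{\alpha_k}\varphi(e_k)=0$ forces $a_{ik}$, viewed as a residue modulo $p^{\alpha_i}$, to be a multiple of $p^{\alpha_i-\alpha_k}$ whenever $\alpha_i>\alpha_k$, and to be unrestricted otherwise.

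I would then characterise which matrices $A$ give automorphisms by passing to $G/pG\cong\mathbb{F}_p^m$. With the above ordering, the divisibility constraint forces the reduced entries $\bar a_{ik}$ modulo $p$ to vanish whenever $\alpha_i>\alpha_k$, so $\bar A$ is block upper-triangular relative to the partition of $\{1,\ldots,m\}$ into blocks of equal $\alpha$-value, whose first and last indices are precisely the $c_k$ and $d_k$ of the statement. A standard fact about finite abelian $p$-groups is that $\varphi$ is an automorphism if and only if $\bar A$ is invertible over $\mathbb{F}_p$, so it remains to count invertible reductions together with their admissible lifts.

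For the count, build $A$ column by column. Since entries in rows $i>d_k$ reduce to $0$ mod $p$, the mod-$p$ part of column $k$ lives in $\mathbb{F}_p^{d_k}$ and must be linearly independent from the previous $k-1$ columns; this gives $p^{d_k}-p^{k-1}$ choices and produces the first product in the formula. After fixing the mod-$p$ part, each entry $a_{ik}$ with $i\le d_k$ admits $p^{\alpha_i-1}$ lifts to $\mathbb{Z}/p^{\alpha_i}\mathbb{Z}$, while each entry with $i>d_k$ has exactly $p^{\alpha_k}$ admissible values of the form $p^{\alpha_i-\alpha_k}b$. Swapping the order of the iterated products collapses $\prod_k\prod_{i\le d_k}p^{\alpha_i-1}$ to $\prod_i(p^{\alpha_i-1})^{m+1-c_i}$ (for fixed $i$, the admissible $k$ are precisely those with $k\ge c_i$) and $\prod_k\prod_{i>d_k}p^{\alpha_k}$ to $\prod_j(p^{\alpha_j})^{m-d_j}$, recovering the remaining two products.

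The main obstacle I anticipate is the bookkeeping in the final step: keeping the two sources of contribution distinct (the ``lift'' entries controlled by the row exponent $\alpha_i-1$ versus the ``divisibility'' entries controlled by the column exponent $\alpha_k$), and verifying that the counts of pairs $(i,k)$ with $i\le d_k$ and with $i>d_k$ really are tallied by $m+1-c_i$ and $m-d_j$ respectively. The mod-$p$ invertibility step is standard but deserves care, since one must observe that the block-triangular structure of $\bar A$ places exactly the right number of coordinates of column $k$ into the ``visible'' subspace $\mathbb{F}_p^{d_k}$ for the linear-independence count to produce the factor $p^{d_k}-p^{k-1}$ rather than something smaller.
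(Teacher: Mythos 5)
Your argument is correct, but note that the paper does not prove this statement at all: it is imported verbatim from Hillar and Rhea, \emph{Automorphisms of finite abelian groups}, Amer.\ Math.\ Monthly 114 (2007), and cited as such. What you have written is essentially the proof in that reference: represent endomorphisms by matrices $(a_{ik})$ with $a_{ik}\in\mathbb{Z}/p^{\alpha_i}\mathbb{Z}$ subject to $p^{\alpha_i-\alpha_k}\mid a_{ik}$ when $\alpha_i>\alpha_k$, characterise automorphisms by invertibility of the reduction modulo $p$ (their Theorem 3.6, your ``standard fact,'' which follows from the surjectivity criterion on $G/pG$), and count column by column. Your bookkeeping checks out: with $\alpha_1\le\cdots\le\alpha_m$ one has $i\le d_k\iff\alpha_i\le\alpha_k\iff k\ge c_i$, so the lift factors $\prod_k\prod_{i\le d_k}p^{\alpha_i-1}$ and the divisibility factors $\prod_k\prod_{i>d_k}p^{\alpha_k}$ do collapse to $\prod_i(p^{\alpha_i-1})^{m+1-c_i}$ and $\prod_j(p^{\alpha_j})^{m-d_j}$, and the span of the first $k-1$ reduced columns lies in $\mathbb{F}_p^{d_k}$ because $d_j\le d_k$ for $j<k$, which is exactly what makes the factor $p^{d_k}-p^{k-1}$ correct.
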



\section{Construction of pentagonal quasigroups}\setcounter{section}{3}\setcounter{theorem}{0}

We start with the characterization of pentagonal quasigroups induced by $\mathbb{Z}_n$.
\begin{theorem}\label{main2}
A groupoid $(Q,\cdot)$ of order $n>2$ is a pentagonal quasigroup induced by the group $\mathbb{Z}_n$ if and only if there exist $1<a<n$ such that $(a,n)=(a-1,n)=1$, $x\cdot y=[ax+(1-a)y]_n$ and
\begin{eqnarray}\label{e7}
[a^4-a^3+a^2-a+1]_n=0 .
\end{eqnarray}
\end{theorem}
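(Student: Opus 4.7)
The plan is to reduce the claim directly to Theorem \ref{main} by noting that every automorphism of $\mathbb{Z}_n$ is multiplication by some unit $a\in\mathbb{Z}_n^*$, and then translating each abstract condition (automorphism, regularity, polynomial identity) into an arithmetic condition on $a$.

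For the forward direction, I would start from Theorem \ref{main}: since $(Q,\cdot)$ is pentagonal and induced by $\mathbb{Z}_n$, there is a regular automorphism $\varphi$ of $\mathbb{Z}_n$ with $x\cdot y=\varphi(x)+(\varepsilon-\varphi)(y)$ and $\varphi^4-\varphi^3+\varphi^2-\varphi+\varepsilon=0$. Every automorphism of the cyclic group has the form $\varphi(x)=[ax]_n$ for some $a$ with $(a,n)=1$; substituting yields $x\cdot y=[ax+(1-a)y]_n$. Regularity of $\varphi$ says that $[(a-1)x]_n=0$ forces $x=0$, and this is equivalent to $(a-1,n)=1$. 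Finally, since $\varphi^k$ is multiplication by $a^k$, the identity \eqref{e5} evaluated at $x=1$ gives \eqref{e7}. The bounds $1<a<n$ then follow automatically: $(a,n)=1$ rules out $a\equiv 0$, while $(a-1,n)=1$ together with $n>2$ rules out $a\equiv 1$.

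For the converse, given $a$ satisfying the three arithmetic conditions, I would simply define $\varphi(x)=[ax]_n$. Then $(a,n)=1$ makes $\varphi$ an automorphism of $\mathbb{Z}_n$, the condition $(a-1,n)=1$ makes it regular, and \eqref{e7} is exactly the coefficient identity needed so that $\varphi^4-\varphi^3+\varphi^2-\varphi+\varepsilon$ annihilates every element of $\mathbb{Z}_n$. Invoking the ``if'' part of Theorem \ref{main} shows that $x\cdot y=[ax+(1-a)y]_n$ is a pentagonal quasigroup induced by $\mathbb{Z}_n$.

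There is essentially no serious obstacle here; the theorem is a transcription of Theorem \ref{main} from the language of endomorphisms to the language of integers modulo $n$. The only mild care required is the equivalence between ``$(a-1)$ is a non-zero-divisor modulo $n$'' and ``$(a-1,n)=1$'', and the observation that the polynomial condition on the operator $\varphi$ reduces, because $\varphi^k=[a^k\cdot]_n$, to the single scalar congruence \eqref{e7} rather than to $n$ separate congruences.
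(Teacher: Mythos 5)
Your proposal is correct and follows essentially the same route as the paper: both reduce the statement to Theorem \ref{main} by identifying automorphisms of $\mathbb{Z}_n$ with multiplication by units $a$, translating regularity into $(a-1,n)=1$ (equivalently, into $\varepsilon-\varphi$ being an automorphism), and observing that the operator identity \eqref{e5} collapses to the single congruence \eqref{e7}. No gaps.
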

\begin{proof}
Automorphisms of the group $\mathbb{Z}_n$ have the form $\varphi(x)=ax$, where $(a,n)=1$.
Since $\varepsilon-\varphi$ also is an automorphism, $(a-1,n)=1$. Moreover, the equation
$(a-1)x=0({\rm mod}\,n)$ has $d=(a-1,n)$ solutions (cf. \cite{Vin}). So, $(a-1,n)=1$ means that the automorphism $\varphi(x)=ax$ is regular.
Theorem \ref{main} completes the proof.
\end{proof}

\begin{theorem}\label{T-constr}
If a regular automorphism $\varphi$ of an Abelian group $(Q,+)$ satisfies $\eqref{e5}$, then $(Q,\ast)$, $(Q,\circ)$ and $(Q,\diamond)$ with the operations
$$x\ast y=\varphi^2(y-x)+y, \  \ \ \ \ x\circ y=\varphi^3(x-y)+y, \ \ \ \ \ x\diamond y=\varphi^4(y-x)+y
$$
are pentagonal quasigroups.
\end{theorem}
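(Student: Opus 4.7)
The plan is to recognize that each of $(Q,\ast)$, $(Q,\circ)$, $(Q,\diamond)$ is already presented in the canonical Toyoda shape $x\cdot y = \varphi'(x) + (\varepsilon-\varphi')(y)$ demanded by Theorem \ref{main}, and then verify the two hypotheses of that theorem for the resulting $\varphi'$. Expanding the three given formulas yields
\[
x\ast y = -\varphi^{2}(x) + (\varepsilon+\varphi^{2})(y), \quad x\circ y = \varphi^{3}(x) + (\varepsilon-\varphi^{3})(y), \quad x\diamond y = -\varphi^{4}(x) + (\varepsilon+\varphi^{4})(y),
\]
so the candidate automorphisms are $\varphi'_\ast = -\varphi^{2}$, $\varphi'_\circ = \varphi^{3}$, and $\varphi'_\diamond = -\varphi^{4}$.

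Corollary \ref{C-abel} then splits the proof into two cases. In the generic case $\varphi^{5}=-\varepsilon$ (so $\varphi$ has order $10$), the identity $-\varepsilon=\varphi^{5}$ converts each $\varphi'$ into a pure power of $\varphi$:
\[
\varphi'_\ast=\varphi^{7}, \quad \varphi'_\circ=\varphi^{3}, \quad \varphi'_\diamond=\varphi^{9},
\]
all three exponents being coprime to $10$. Regularity of $\varphi^{k}$ is then immediate: choosing $m$ with $km\equiv 1\,({\rm mod}\,10)$, the equality $\varphi^{k}(x)=x$ gives $\varphi(x)=\varphi^{km}(x)=x$, which forces $x=0$ by the regularity of $\varphi$. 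The polynomial identity $(\varphi^{k})^{4}-(\varphi^{k})^{3}+(\varphi^{k})^{2}-\varphi^{k}+\varepsilon=0$ is verified by reducing exponents modulo $10$ using $\varphi^{j+5}=-\varphi^{j}$; in each of the three cases the expression collapses back to the original $\varphi^{4}-\varphi^{3}+\varphi^{2}-\varphi+\varepsilon$, which is zero by hypothesis.

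The degenerate case $\varphi=-\varepsilon$ with $\exp(Q,+)=5$ is quick: since then $\varphi^{2}=\varphi^{4}=\varepsilon$ and $\varphi^{3}=-\varepsilon$, all three operations collapse to the single map $x\cdot y = 2y-x$. Taking $\varphi'=-\varepsilon$ one sees that $(\varphi')^{4}-(\varphi')^{3}+(\varphi')^{2}-\varphi'+\varepsilon = 5\varepsilon$, which vanishes on a group of exponent $5$, and $\varphi'$ is regular because such a group has no $2$-torsion.

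The main obstacle is purely bookkeeping --- keeping exponents straight when substituting $\varphi^{5}=-\varepsilon$ --- and once one observes that $\{3,7,9\}$ are precisely the nontrivial units modulo $10$, all three verifications reduce to essentially the same calculation. Conceptually the statement is saying that $\varphi^{k}$ induces a pentagonal structure for every $k$ coprime to $10$, and the three new operations correspond to the choices $k\in\{3,7,9\}$ complementary to the original $k=1$.
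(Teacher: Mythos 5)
Your proof is correct, but it takes a genuinely different route from the paper. The paper derives the three operations from Vidak's result that the quasigroup words $y\cdot(yx\cdot x)x$, $(yx\cdot y)x$ and $xy\cdot y$ applied to a pentagonal quasigroup again yield pentagonal quasigroups, and then merely computes their linear forms; your argument bypasses Vidak entirely and verifies the hypotheses of Theorem \ref{main} directly for the candidate automorphisms $-\varphi^2$, $\varphi^3$, $-\varphi^4$, which makes it self-contained and, after the observation that these equal $\varphi^7$, $\varphi^3$, $\varphi^9$ modulo $\varphi^{10}=\varepsilon$, reduces everything to one short exponent computation. Two small remarks. First, your case split via Corollary \ref{C-abel} is unnecessary: the relation $\varphi^5=-\varepsilon$ follows in all cases directly from \eqref{e5}, since $(\varepsilon+\varphi)(\varphi^4-\varphi^3+\varphi^2-\varphi+\varepsilon)=\varphi^5+\varepsilon$, and your ``generic case'' computation (which uses only $\varphi^5=-\varepsilon$ and $\varphi^{10}=\varepsilon$) therefore already covers the degenerate case $\varphi=-\varepsilon$. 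Second, the parenthetical claim that $\varphi$ has order $10$ is not always true (on an elementary Abelian $2$-group one has $-\varepsilon=\varepsilon$, so $\varphi^5=\varepsilon$), but this is harmless since your regularity and polynomial verifications use only $\varphi^{10}=\varepsilon$, not the exact order. Your closing observation that $\varphi^k$ induces a pentagonal structure precisely for the units $k\in\{1,3,7,9\}$ modulo $10$ is a nice conceptual gloss that the paper does not make explicit.
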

\begin{proof}
If $\varphi$ and $(Q,+)$ are as in the assumption, then, by Theorem \ref{main}, $(Q,\cdot)$ with the operation $x\cdot y=\varphi(x-y)+y$ is a pentagonal quasigroup. From Vidak's results presented in \cite{Vid} it follows that also $(Q,\ast)$, $(Q,\circ)$ and $(Q,\diamond)$, where $x\ast y=y\cdot (yx\cdot x)x$, $x\circ y=(yx\cdot y)x$ and $x\diamond y= (xy\cdot )y$, are pentagonal quasigroups. Applying \eqref{e4} and \eqref{e5} to these operations we obtain our thesis.
\end{proof}

\begin{theorem}\label{T-Zn} 
A pentagonal quasigroup induced by the group $\mathbb{Z}_n$ has one of the following forms 
\begin{eqnarray*}
&&x\cdot y=[ax+(1-a)y]_n,\\
&&x\cdot y=[-a^2x+(1+a^2)y]_n,\\
&&x\cdot y=[a^3x+(1-a^3)y]_n,\\
&&x\cdot y=[-a^4x+(1+a^4)y]_n,
\end{eqnarray*} 
where $1<a<n-1$ satisfy $\eqref{e7}$ and $(a,n)=(a-1,n)=1$.

When $a=n-1$ there is only one pentagonal quasigroup. It is induced by $\mathbb{Z}_5$ and has the form $x\cdot y=[4x+2y]_5$.
\end{theorem}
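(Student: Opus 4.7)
The plan is to combine Theorem \ref{main2} with Theorem \ref{T-constr}, applied to $(\mathbb{Z}_n,+)$ and its regular automorphism $\varphi(x)=ax$. By Theorem \ref{main2}, every pentagonal quasigroup induced by $\mathbb{Z}_n$ is of the shape $xy=[ax+(1-a)y]_n$ with $a$ satisfying \eqref{e7} and $(a,n)=(a-1,n)=1$. Restricting $a$ to the generic range $1<a<n-1$ gives the first displayed form, while the boundary value $a=n-1$ will be treated separately at the end.

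For the other three forms I would feed the same $\varphi(x)=ax$ into Theorem \ref{T-constr}. Using $\varphi^k(z)=a^kz$, the three auxiliary operations there specialize immediately to
\begin{align*}
x\ast y &= a^2(y-x)+y = [-a^2 x+(1+a^2)y]_n,\\
x\circ y &= a^3(x-y)+y = [a^3 x+(1-a^3)y]_n,\\
x\diamond y &= a^4(y-x)+y = [-a^4 x+(1+a^4)y]_n,
\end{align*}
matching the second, third and fourth displayed forms respectively. Theorem \ref{T-constr} guarantees that each of these is a pentagonal quasigroup on $\mathbb{Z}_n$, so by Theorem \ref{main2} the new coefficients $-a^2,\,a^3,\,-a^4$ automatically satisfy \eqref{e7} together with the required coprimality conditions. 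If one prefers to check \eqref{e7} directly, multiplying \eqref{e7} by $a+1$ gives $a^5\equiv -1\pmod n$, and the three substitutions reduce mechanically to \eqref{e7} again.

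It remains to handle $a=n-1$, i.e.\ $a\equiv -1\pmod n$. Substituting into \eqref{e7} yields $1+1+1+1+1=5\equiv 0\pmod n$, so $n\mid 5$; together with $n>2$ inherited from Theorem \ref{main2} this forces $n=5$, $a=4$, and the single resulting quasigroup $xy=[4x+(1-4)y]_5=[4x+2y]_5$. I do not anticipate a genuine obstacle here: the four forms fall out as a one-line specialization of Theorem \ref{T-constr} once the identity $\varphi^k(z)=a^k z$ is noted, and the boundary case is a direct modular check.
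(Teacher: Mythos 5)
Your proposal is correct and follows essentially the same route as the paper: the paper's proof likewise just invokes Theorem \ref{main2} for the first form and Theorem \ref{T-constr} for the remaining three (after noting that \eqref{e7} has at most four solutions), and dismisses $a=n-1$ as obvious, which is exactly the modular check $5\equiv 0\pmod n$ that you carry out. Your explicit computation of $\varphi^k(z)=a^kz$ in the three auxiliary operations is the intended specialization.
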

\begin{proof}
Equation \eqref{e7} has no more than four solutions, so $\mathbb{Z}_n$ induces no more than four pentagonal quasigroups. Theorems \ref{main2} and \ref{T-constr} complete the proof for $1<a<n-1$. 
The case $a=n-1$ is obvious.
\end{proof}

Note that for $[a+1]_n\ne 0$ the equation \eqref{e7} implies $[a^5+1]_n=0$. Since it is valid also for $a=-1$ in a pentagonal quasigroup with $x\cdot y=[ax+(1-a)y]_n$ we have
\begin{eqnarray}\label{e8}
[a^5]_n=n-1.
\end{eqnarray}
\begin{proposition}\label{sub}
Let $(Q,\cdot)$ be a pentagonal quasigroup induced by the group $\mathbb{Z}_n$, where $n>5$. If $m|n$, then $(Q,\cdot)$ contains a pentagonal subquasigroup of order $m$.
\end{proposition}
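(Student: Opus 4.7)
The plan is to identify a canonical copy of $\mathbb{Z}_m$ inside $\mathbb{Z}_n$, show it is closed under the quasigroup operation, and then apply Theorem \ref{main2} to the restriction to conclude it is a pentagonal quasigroup of order $m$.

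By Theorem \ref{main2} the operation has the form $x\cdot y=[ax+(1-a)y]_n$ with $(a,n)=(a-1,n)=1$ and $[a^4-a^3+a^2-a+1]_n=0$. Setting $k=n/m$, I would take $H=\{0,k,2k,\ldots,(m-1)k\}\subset\mathbb{Z}_n$, the unique subgroup of order $m$. For $x=ik$ and $y=jk$ in $H$ one has $x\cdot y=[(ai+(1-a)j)k]_n=[ai+(1-a)j]_m\cdot k\in H$, so $H$ is closed under $\cdot$, and the bijection $ik\leftrightarrow i$ transports $(H,\cdot)$ onto $(\mathbb{Z}_m,\star)$ with $i\star j=[ai+(1-a)j]_m$.

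What remains is to check that Theorem \ref{main2} applies to $(\mathbb{Z}_m,\star)$. The coprimality conditions $(a,m)=(a-1,m)=1$ descend from their mod $n$ counterparts because $m\mid n$: any common divisor of $a$ (respectively $a-1$) and $m$ would also divide $n$. The polynomial condition $[a^4-a^3+a^2-a+1]_m=0$ is likewise inherited from its mod $n$ version. Thus, for $m>2$, Theorem \ref{main2} yields that $(H,\cdot)$ is a pentagonal quasigroup of order $m$. The case $m=1$ is the trivial singleton quasigroup, and $m=2$ does not actually occur here, since $(a,n)=(a-1,n)=1$ forces $n$ to be odd (odd and even parities of $a$ and $a-1$ cannot both be coprime to an even $n$).

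I do not expect a genuine obstacle beyond this bookkeeping. The whole argument reduces to a subgroup-closure calculation in $\mathbb{Z}_n$ together with the observation that both the coprimality and the defining polynomial conditions of Theorem \ref{main2} push forward unchanged from modulus $n$ to any of its divisors.
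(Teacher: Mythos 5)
Your proof is correct and follows essentially the same route as the paper's: reduce $a$ modulo $m$ and observe that the coprimality and polynomial conditions of Theorem \ref{main2} descend from modulus $n$ to the divisor $m$. You merely spell out the details the paper leaves as ``not difficult to see'' (the explicit subgroup $\{0,k,\ldots,(m-1)k\}$, the closure computation, and the edge cases $m=1,2$), which is a welcome but not substantively different elaboration.
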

\begin{proof}
If $m|n$ then the group $\mathbb{Z}_n$ contains a subgroup isomorphic to $\mathbb{Z}_m$. Let $x\cdot y=[ax+(1-a)y]_n$.
Since $1<a<n-1$, $(a,n)=(a-1,n)=1$, also $(a,m)=(a-1,m)=1$ and $[a^4-a^3+a^2-a+1]_m=0$. Let $a'=[a]_m$. Then, as it is not difficult to see, $\mathbb{Z}_m$ with the multiplication $x\cdot y=[a'x+(1-a')y]_m$ is a pentagonal quasigroup.
\end{proof}

\begin{proposition}\label{ord}
If an Abelian group $G$ inducing a pentagonal quasigroup has an element of order $k>1$, then the number of such elements is greater than $3$.
\end{proposition}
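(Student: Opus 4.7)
The plan is to invoke Theorem \ref{main} to replace $G$ by an Abelian group $(Q,+)$ with a regular automorphism $\varphi$ satisfying \eqref{e5}, and then to analyse the orbit of a fixed non-zero element $x$ of order $k$ under the cyclic group $\langle\varphi\rangle$.

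First I would multiply the polynomial in \eqref{e5} by $\varphi+\varepsilon$ to deduce $\varphi^5=-\varepsilon$, so $\varphi^{10}=\varepsilon$ and the order of $\varphi$ in $\mathrm{Aut}(Q,+)$ divides $10$. Since any automorphism preserves element orders, every term of the orbit $\{x,\varphi(x),\varphi^2(x),\ldots\}$ has order $k$, and its size divides $10$, hence lies in $\{1,2,5,10\}$.

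The argument then proceeds by case analysis on this orbit size. Size $1$ is excluded by regularity of $\varphi$, since $x\neq 0$. Size $10$ trivially produces ten elements of order $k$. Size $5$ forces $\varphi^5(x)=x$, which combined with $\varphi^5(x)=-x$ gives $2x=0$, so $k=2$ and the orbit itself already contributes five elements of order $2$.

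The delicate case, which I expect to be the main obstacle, is orbit size $2$: then $\varphi^2(x)=x$, so $\varphi^4(x)=x$ and $\varphi^3(x)=\varphi(x)$, and substituting into \eqref{e5} applied at $x$ collapses the identity to $3x-2\varphi(x)=0$; applying $\varphi$ and eliminating $\varphi(x)$ yields $5x=0$, hence $k=5$. Since the orbit alone now supplies only two elements, I would finish by passing to the subgroup $\langle x\rangle\cong\mathbb{Z}_5$, whose four non-zero elements all have order $5$. In every case we thus obtain at least four elements of order $k$, as required.
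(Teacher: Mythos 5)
Your proof is correct, but it is organized differently from the paper's. The paper argues by contradiction on the total count: it assumes there are exactly one, exactly two, or exactly three elements of order $k$, notes that $\varphi$ must permute this set without fixed points (acting as a transposition or a $3$-cycle), and extracts a contradiction in each case from \eqref{e5} and $\varphi^5=-\varepsilon$. You instead fix a single element $x$ of order $k$ and classify its $\langle\varphi\rangle$-orbit, whose size must divide $10$; this disposes of the awkward ``three elements'' configuration for free (since $3$ does not divide $10$), and in the surviving cases you exhibit at least four elements of order $k$ directly: ten from a full orbit; five elements of order $2$ when orbit size $5$ combines $\varphi^5(x)=x$ with $\varphi^5(x)=-x$ to give $2x=0$; and, in the orbit-size-$2$ case --- where your relations $2\varphi(x)=3x$ and $3\varphi(x)=2x$ correctly yield $5x=0$, hence $k=5$ --- the four nonzero elements of $\langle x\rangle\cong\mathbb{Z}_5$. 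Your route is somewhat more economical (no separate $3$-cycle case, no contradiction bookkeeping) and yields the extra information that an orbit of size $2$ forces $k=5$ while an orbit of size $5$ forces $k=2$; the paper's route avoids your final appeal to the cyclic subgroup $\langle x\rangle$ by squeezing a contradiction directly out of the assumption that only two such elements exist.
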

\begin{proof}
An automorphism preserves the order of elements of $G$. So, if only one $x\in G$ has order $k>1$, then $\varphi(x)=x$, which contradicts to the assumption on $\varphi$. If only two elements $x\ne y$ have order $k>1$, then $\varphi(x)=y$ and $\varphi^2(x)=x$. Using \eqref{e5} we get $3x=2y$ and $3y=2x$. Therefore, $2x=3y=y+3x$, which implies that $x=-y$. But $k(2x)=2(kx)=0$. Also $2x\ne 0$ or else $x=-x=y$, a contradiction. Thus, $2x$ has order $k$. Then $2x=x$ or $2x=y$. The first case is impossible. In the second $2x=y=3y$ implies $2y=0$, so $y=-y=x$, a contradiction. Therefore, $G$ has at least three elements of order $k$.

If $G$ has three distinct elements $x,y,z$ of order $k>1$, then $\varphi(x)=y$, $\varphi^2(x)=\varphi(y)=z$, $\varphi^3(x)=\varphi(z)=x.$ Obviously, $\varphi(x)\ne -x$, because $\varphi(x)=-x$ implies $x=\varphi^2(x)=z$, which is impossible. Thus, by Corollary \ref{C-abel}, $0=\varphi^5(x)+x=z+x$ and $0=\varphi(z)+\varphi(x)=x+y$. So, $x+z=x+y$, a contradiction.
Hence, $G$ has more than three elements of order $k>1$.
\end{proof}
\begin{corollary}\label{C-ord}
Abelian groups of order $n$, where 
\begin{enumerate}
\item[$(i)$] $2|n$ and $4/\!\! |\,n$ or 
\item[$(ii)$] $3|n$ and $9/\!\! |\,n$ or 
\item[$(iii)$] $4|n$ and $8/\!\! |\,n$,
\end{enumerate}
do not induce pentagonal quasigroups.
\end{corollary}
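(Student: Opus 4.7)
The strategy is to invoke Proposition \ref{ord} and for each case exhibit an order $k>1$ that is attained by only $1$, $2$, or $3$ elements of $G$, producing a contradiction.

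First, I would reduce to the Sylow picture: since $G$ is a finite Abelian group, it decomposes as $S_p\times K$ where $S_p$ is the Sylow $p$-subgroup and $(|K|,p)=1$. The order of an element $(s,k)$ is $\mathrm{lcm}(|s|,|k|)$, and because $|k|$ is coprime to $p$, the elements of $p$-power order in $G$ are exactly $\{(s,0):s\in S_p\}$. So counting elements of order $p$ (or of order $p^2$ when appropriate) in $G$ reduces to counting them in $S_p$.

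Next, I would handle the three cases by invoking the classification of Abelian groups of order a small prime power:
\begin{itemize}
\item[(i)] If $2\mid n$ and $4\nmid n$, then $S_2\cong\mathbb{Z}_2$, so $G$ has exactly one element of order $2$.
\item[(ii)] If $3\mid n$ and $9\nmid n$, then $S_3\cong\mathbb{Z}_3$, so $G$ has exactly two elements of order $3$.
\item[(iii)] If $4\mid n$ and $8\nmid n$, then $S_2$ is either $\mathbb{Z}_4$ or $\mathbb{Z}_2\times\mathbb{Z}_2$. In the first subcase $G$ has exactly two elements of order $4$; in the second $G$ has exactly three elements of order $2$.
\end{itemize}
In each case there is an order $k>1$ realized by at most $3$ elements of $G$, which by Proposition \ref{ord} prevents $G$ from inducing a pentagonal quasigroup.

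I do not expect any real obstacle: the content is entirely the Sylow-subgroup bookkeeping above, and the decisive work has already been done in Proposition \ref{ord}. The one point that needs a sentence of care is subcase (iii) with $S_2\cong\mathbb{Z}_2\times\mathbb{Z}_2$, where the count is exactly $3$, so I must note that Proposition \ref{ord} demands \emph{strictly more than} three elements of the given order.
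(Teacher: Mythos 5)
Your proposal is correct and follows essentially the same route as the paper: both arguments exhibit, in each case, an order $k>1$ realized by at most three elements and then invoke Proposition \ref{ord}. The only cosmetic difference is that in the $\mathbb{Z}_4$ subcase of (iii) you count the two elements of order $4$, whereas the paper counts the single element of order $2$; either count works, and your explicit Sylow-decomposition bookkeeping just makes precise what the paper leaves implicit.
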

\begin{proof}
In the  first case, a group has one element of order $2$; in the second -- two elements of order $3$; in third case -- one or three elements of order $2$.
\end{proof}
\begin{theorem}\label{T-fin}
A finite pentagonal quasigroup has order $5s$ or $5s+1$.
\end{theorem}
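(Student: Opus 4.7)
The plan is to use Theorem \ref{main} to reduce the problem to working in the induced Abelian group $(Q,+)$ with its regular automorphism $\varphi$ satisfying \eqref{e5}; multiplying the minimal polynomial by $\varphi+\varepsilon$ (equivalently, invoking Corollary \ref{C-abel}) yields $\varphi^5=-\varepsilon$ and hence $\varphi^{10}=\varepsilon$. The strategy is then to decompose $Q$ into $\varphi$-orbits, classify exactly which orbit lengths occur, and read off $|Q|\pmod 5$.

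Since $\varphi^{10}=\varepsilon$, every orbit of $\varphi$ on $Q$ has length dividing $10$, hence lies in $\{1,2,5,10\}$. Regularity of $\varphi$ forces the unique length-$1$ orbit to be $\{0\}$. If $x$ lies in a length-$2$ orbit then $\varphi^2(x)=x$, so $\varphi^5(x)=\varphi(x)$; combined with $\varphi^5(x)=-x$, this is equivalent to $\varphi(x)=-x$ and $x\neq 0$. If $x$ lies in a length-$5$ orbit then $\varphi^5(x)=x$ and $\varphi^5(x)=-x$ together give $2x=0$ and $x\neq 0$. The remaining nonzero elements sit in length-$10$ orbits.

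Introduce the subgroups $K=\ker(\varphi+\varepsilon)$ and $L=\{x\in Q:2x=0\}$. Both are $\varphi$-invariant (since $\varphi$ preserves element orders and commutes with $x\mapsto -x$), and $K\cap L=\{0\}$, because any $x\in K\cap L$ satisfies $\varphi(x)=-x=x$ and so $x=0$ by regularity. On $K$ the automorphism equals $-\varepsilon$, hence \eqref{e5} collapses to $5\varepsilon=0$, so $|K|$ is a power of $5$ and $|K|\equiv 0$ or $1\pmod 5$. On $L$, the relation $\varphi^5=-\varepsilon$ becomes $\varphi^5=\varepsilon$, so every orbit in $L\setminus\{0\}$ has length exactly $5$, giving $|L|\equiv 1\pmod 5$. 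Letting $c$ denote the number of length-$10$ orbits, the orbit decomposition reads
\[
|Q| \;=\; 1 + (|K|-1) + (|L|-1) + 10c,
\]
and reducing modulo $5$ gives $|Q|\equiv |K|\pmod 5\in\{0,1\}$, so $|Q|$ has the form $5s$ or $5s+1$.

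The one step that carries the content is the orbit-length classification derived from $\varphi^5=-\varepsilon$, together with the observation that $\varphi|_K=-\varepsilon$ forces $5$-torsion on $K$. Once these are in place, the disjointness $K\cap L=\{0\}$ and the mod-$5$ arithmetic are routine bookkeeping.
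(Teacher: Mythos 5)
Your proof is correct and follows essentially the same route as the paper: derive $\varphi^5=-\varepsilon$, decompose $Q\setminus\{0\}$ into $\varphi$-orbits of length $2$, $5$ or $10$, and note that length-$2$ orbits force $5x=0$ while length-$5$ orbits force $2x=0$. Your packaging of the count via the subgroups $K=\ker(\varphi+\varepsilon)$ and $L=\{x:2x=0\}$ is a tidier way of doing the bookkeeping than the paper's case analysis over cycle-type combinations, but the underlying argument is the same.
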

\begin{proof}
Suppose that a pentagonal quasigroup $(Q,\cdot)$ is induced by the group $(Q,+)$, where $Q=\{0,e_2,e_3,\ldots,e_n\}$. 
Each automorphism $\psi$ of this group can be identified with a permutation $\varphi$ of the set $\{e_2,e_3,\ldots,e_n\}$. Each such permutation is a cycle or can be decomposed into disjoint cycles. Since, by Corollary \ref{C-abel}, $\varphi^2=\varepsilon$ or $\varphi^{10}=\varepsilon$, a permutation $\varphi$ can be decomposed into disjoint cycles of the length $2$, $5$ or $10$. If $\varphi$ contains a cycle of the length $2$, then for some $e_i\in Q$ we have $\varphi(e_i)=e_j\ne e_i$ and $\varphi^2(e_i)=e_i$. If $e_j\ne -e_i$, then by Corollary \ref{C-abel}, $-e_i=\varphi^5(e_i)=\varphi(e_i)$, a contradiction. Thus $e_j=-e_i$ and consequently $5e_i=0$, by \eqref{e5}. So, in this case $5$ is a divisor of $n$. Hence, if $\varphi$ is decomposed into $k$ cycles of the length $2$, then $(Q,+)$ has the order $n=2k+1=5t$. Since $t$ must be odd, we see that in this case $n=10s+5$.

If $\varphi$ contains a cycle of the length $5$, then for some $e_i\in Q$ we have $\varphi^5(e_i)=e_i$ and $\varphi(e_i)\ne -e_i$. This, by Corollary \ref{C-abel}, implies $2e_i=0$. Thus, $2$ is a divisor of $n$ and $n>5$. Moreover, each element of this cycle has order $2$. Therefore, in the case when $\varphi$ is decomposed into disjoint cycles of the length $5$, the group $(Q,+)$ has $5s+1$ elements and all non-zero elements have order $2$. So, $(Q,+)$ is the direct product of copies of $\mathbb{Z}_2$. 
Thus, $n=2^k=5t+1$. So, $t$ is odd and, as in the previous case, $n=10s+6$. 
If $\varphi$ is decomposed into cycles of the length $10$, then obviously $n=10s+1$.

Now, if $\varphi$ is decomposed into cycles of the length $2$ and $5$, then $10$ divides $n$. Thus $n=10s$. If $\varphi$ is decomposed into $p>0$ cycles of the length $2$ and $q>0$ cycles of the length $10$, then $n=2p+10q+1$ and $5$ divides $n$. Hence $n=10s+5$. If $\varphi$ is decomposed into $p>0$ cycles of the length $5$ and $q>0$ cycles of the length $10$, then $n=5p+10q+1$ and $2$ divides $n$. Hence $n=10s+6$. Finally, if $\varphi$ is decomposed into $p>0$ cycles of the length $2$, $q>0$ cycles of the length $5$ and $r$ cycles of the length $10$, then $n=2p+5q+10r+1$ and $5$ divides $n$. Hence $n=10s+5$. 
\end{proof}

\begin{corollary}\label{Z5}
The smallest pentagonal quasigroup is induced by the group $\mathbb{Z}_5$ and has the form 
 $x\cdot y=[4x+2y]_5$.
\end{corollary}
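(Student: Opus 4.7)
The statement is essentially a direct corollary of the order-restriction results already established, so the plan is to assemble them. The two key inputs are Theorem~\ref{T-fin}, which forces any finite pentagonal quasigroup to have order $5s$ or $5s+1$, and Corollary~\ref{C-ord}, which rules out certain orders via the $2$- and $3$-structure of the inducing group. After pinning down the order, Theorem~\ref{main2} will pick out the multiplication.

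First I would apply Theorem~\ref{T-fin} to eliminate orders $2$, $3$, and $4$ at once, since none of these is of the form $5s$ or $5s+1$. The remaining candidates below $7$ are then $5$ and $6$, and order $6$ is killed by Corollary~\ref{C-ord}(i) because $2 \mid 6$ but $4 \nmid 6$. Existence at order $5$ is supplied by Corollary~\ref{C-0} (the $t=1$ case), so the smallest nontrivial pentagonal quasigroup has order exactly $5$.

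Next I would identify its form. Since $5$ is prime, the only Abelian group of order $5$ is $\mathbb{Z}_5$, so Theorem~\ref{main2} forces the multiplication to be $x \cdot y = [ax + (1-a)y]_5$ with $1 < a < 5$, $(a,5) = (a-1,5) = 1$, and $a$ satisfying \eqref{e7}. The gcd conditions hold trivially for each of $a \in \{2,3,4\}$, and a direct evaluation of $a^4 - a^3 + a^2 - a + 1 \pmod 5$ (giving $11$, $61$, $205$ respectively) shows that only $a = 4$ works; this is also exactly the case $a = n-1$ singled out in Theorem~\ref{T-Zn}. The corresponding multiplication is $x \cdot y = [4x + 2y]_5$, as claimed.

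There is no real obstacle here: the substantive work is all in the preceding theorems. The only mild point to watch is the convention that ``smallest'' tacitly excludes the one-element trivial quasigroup (covered by $t=0$ of Corollary~\ref{C-0}).
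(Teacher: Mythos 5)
Your proof is correct and follows essentially the same route as the paper: Theorem~\ref{T-fin} rules out orders below $5$, and checking \eqref{e7} in $\mathbb{Z}_5$ singles out $a=4$, giving $x\cdot y=[4x+2y]_5$. Your extra steps (excluding order $6$, which is already irrelevant once order $5$ is realized, and citing Corollary~\ref{C-0} for existence) only make explicit what the paper leaves implicit.
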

\begin{proof}
Indeed, by Theorem \ref{T-fin}, $\mathbb{Z}_5$ is the smallest group that can be used in the construction of a pentagonal quasigroup. In this group only $a=4$ satisfies \eqref{e7}. Thus, the multiplication of this quasigroup is defined by $x\cdot y=[4x+2y]_5$.
\end{proof}

\begin{proposition}\label{Zn-com}
A groupoid $(Q,\cdot)$ is a commutative pentagonal quasigroup if and only if there exists an abelian group $(Q,+)$ of exponent $11$ such that $x\cdot y=6x+6y$ for all $x,y\in Q$.
\end{proposition}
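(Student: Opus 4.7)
The plan is to use Theorem \ref{main} to reduce the problem to a statement about the automorphism $\varphi$, then exploit commutativity to force a strong algebraic constraint on $\varphi$, and finally combine this with \eqref{e5} to pin down the exponent of $(Q,+)$ and the explicit form of $\varphi$.

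For the forward direction, suppose $(Q,\cdot)$ is a commutative pentagonal quasigroup. By Theorem \ref{main} there is an abelian group $(Q,+)$ and a regular automorphism $\varphi$ with $x\cdot y=\varphi(x)+y-\varphi(y)$ and $\varphi^4-\varphi^3+\varphi^2-\varphi+\varepsilon=0$. Commutativity $x\cdot y=y\cdot x$ unfolds to $\varphi(x)+y-\varphi(y)=\varphi(y)+x-\varphi(x)$, i.e.\ $2\varphi(x)-x=2\varphi(y)-y$ for all $x,y$; evaluating at $y=0$ gives the endomorphism identity $2\varphi=\varepsilon$. Applying $\varphi^{k-1}$ repeatedly, $2^{k}\varphi^{k}=\varepsilon$ for every $k\ge 1$.

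Now I multiply \eqref{e5} by $16$ and substitute $16\varphi^{4}=\varepsilon$, $16\varphi^{3}=2\varepsilon$, $16\varphi^{2}=4\varepsilon$, $16\varphi=8\varepsilon$; the left side collapses to $(1-2+4-8+16)\varepsilon=11\varepsilon$, so $11\varepsilon=0$. This forces $(Q,+)$ to have exponent dividing $11$, hence exponent $11$ (the trivial case being excluded since $\varphi$ must be regular on a nontrivial group, which is needed for $(Q,\cdot)$ to even be a quasigroup of positive order satisfying \eqref{e3}). In a group of exponent $11$ the relation $2\varphi=\varepsilon$ is inverted by multiplication by $6$ (since $2\cdot 6\equiv 1\pmod{11}$), giving $\varphi(x)=6x$. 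Substituting into \eqref{e4}, $x\cdot y=6x+y-6y=6x-5y=6x+6y$, where the last equality uses $-5\equiv 6\pmod{11}$.

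For the converse, let $(Q,+)$ be any abelian group of exponent $11$ and define $x\cdot y=6x+6y$. The map $\varphi(x)=6x$ is an automorphism because $\gcd(6,11)=1$, and it is regular because $\varphi(x)=x$ gives $5x=0$ and $\gcd(5,11)=1$ forces $x=0$. A direct check $6^{4}-6^{3}+6^{2}-6+1\equiv 9-7+3-6+1\equiv 0\pmod{11}$ shows that $\varphi$ satisfies \eqref{e5}. Since $(\varepsilon-\varphi)(y)=y-6y=-5y=6y$ in exponent $11$, the product \eqref{e4} becomes precisely $x\cdot y=6x+6y$, so by Theorem \ref{main} $(Q,\cdot)$ is a pentagonal quasigroup, clearly commutative.

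The only genuinely non-routine step is the passage from commutativity to exponent $11$; everything else is either unpacking definitions or a short modular arithmetic check. The key trick there is recognising that $2\varphi=\varepsilon$ lets one turn each term $\varphi^{k}$ in the pentagonal polynomial into a scalar multiple of $\varepsilon$ after clearing denominators by $16$, producing the integer $1-2+4-8+16=11$.
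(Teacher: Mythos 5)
Your proof is correct and follows essentially the same route as the paper: reduce via Theorem \ref{main}, extract $2\varphi=\varepsilon$ from commutativity, combine with \eqref{e5} to get $11\varepsilon=0$ and $\varphi=6\varepsilon$, then verify the converse directly. Your derivation of $11\varepsilon=0$ by multiplying \eqref{e5} by $2^4$ and using $2^k\varphi^k=\varepsilon$ is a tidier version of the paper's chain of successive factorizations, but it is the same underlying idea.
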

\begin{proof}  
By Theorem \ref{main} for a commutative pentagonal quasigroup there exists an abelian group $(Q,+)$ and its automorphism $\varphi$ such that $\varphi=\varepsilon-\varphi$. Thus $\varepsilon=2\varphi$. This, by \eqref{e5}, gives $\varphi(\varphi^3-\varphi^2+\varphi+\varepsilon)=0$. Therefore, $\varphi(\varphi^2-\varphi+3\varepsilon)=0$, and consequently, $\varphi^2+5\varphi=0$,  so $\varphi+5\varepsilon=0$. Hence $11\varphi=0$. Thus $11x=0$ for each $x\in Q$. Moreover, from $11\varphi=0$ we obtain $\varphi(x)=-10\varphi(x)=-5x=6x$ and $(\varepsilon-\varphi)(x)=x-6x=-5x=6x$. So, exp$(Q,+)=11$ and $x\cdot y=6x+6y$ for all $x,y\in Q$.

The converse statement is obvious.
\end{proof}
\begin{corollary}\label{Cc} The variety of commutative, idempotent, medial groupoids satisfying the pentagonal identity is the variety of commutative, pentagonal quasigroups, whose spectrum is $\{11^n:n=0,1,2,\ldots\}$. 
\end{corollary}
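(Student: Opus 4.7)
The plan is to reduce Corollary \ref{Cc} to Proposition \ref{Zn-com}. The inclusion of commutative pentagonal quasigroups in the larger variety is immediate from \eqref{e1}--\eqref{e3} and commutativity, so the key is the opposite inclusion: to show that a commutative, idempotent, medial groupoid $(Q,\cdot)$ satisfying the pentagonal identity is already a quasigroup. Given that, Proposition \ref{Zn-com} characterises it as $x\cdot y=6x+6y$ on an abelian group of exponent $11$, which is a vector space over $\mathbb{Z}_{11}$, so $|Q|=11^n$ for some $n\geq 0$; since every $(\mathbb{Z}_{11})^n$ with this multiplication clearly lies in the variety, the spectrum is $\{11^n:n=0,1,2,\ldots\}$.

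For the upgrade to a quasigroup, write $R_x(a)=a\cdot x$. Commutativity gives $L_x=R_x$, so it suffices that each $R_x$ be a bijection. Mediality and idempotence together show that $R_x$ is an endomorphism via $(ax)(bx)=(ab)(xx)=(ab)x$, and they also yield the commutation rule $R_x\circ R_y=R_{xy}\circ R_x$, since $(ay)x=(ay)(xx)=(ax)(yx)=(ax)(xy)$. The pentagonal identity itself rewrites as $R_x\bigl(((xy)x)y\bigr)=y$, so the map $f(y):=((xy)x)y$ is a right inverse of $R_x$, which is therefore surjective.

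The main obstacle is to verify that $f$ is also a left inverse of $R_x$, that is $f(ax)=a$ for every $a$. I would first flatten $R_{ax}(x)=x(ax)=(xx)(ax)=(xa)(xx)=(xa)x$ using commutativity and mediality, so that $R_x(R_{ax}(x))=((xa)x)x$, and then compute
\begin{equation*}
R_{ax}\bigl(((xa)x)x\bigr)=\bigl(((xa)x)x\bigr)(ax)=\bigl(((xa)x)a\bigr)(xx)=\bigl(((xa)x)a\bigr)x=a,
\end{equation*}
where the second equality is a single application of mediality $(pq)(rs)=(pr)(qs)$ with $p=(xa)x$, $q=x$, $r=a$, $s=x$, and the last equality is the pentagonal identity $(((xy)x)y)x=y$ applied with $y$ replaced by $a$. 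This yields $f\circ R_x=\mathrm{id}$, so each $R_x$ is bijective; $(Q,\cdot)$ is therefore a commutative pentagonal quasigroup, and the reduction to Proposition \ref{Zn-com} completes the argument.
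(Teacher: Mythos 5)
Your proof is correct and follows essentially the same route as the paper: reduce to Proposition \ref{Zn-com} after showing that a commutative, idempotent, medial groupoid with the pentagonal identity is a quasigroup, where the pentagonal identity supplies the explicit solution $((xy)x)y$ and mediality, idempotence and commutativity give uniqueness. The paper phrases the uniqueness step as a direct computation showing any $z$ with $za=b$ equals $(ab\cdot a)b$, which is exactly your identity $f\circ R_x=\mathrm{id}$ in translation-map language.
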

\begin{proof} It follows from Proposition \ref{Zn-com} that the spectrum of the variety of commutative pentagonal quasigroups is $\{11^n:n=0,1,2,\ldots\}$. So, we need only prove that a commutative, idempotent, medial groupoid $(Q,\cdot)$ satisfying the pentagonal identity is a quasigroup. Let $a,b\in Q$. Then the pentagonal identity ensures that the equations $xa=b$ and $ax=b$ have a solution $x=(ab\cdot a)b$. Suppose that $za=b$. Then $z=(az\cdot a)z\cdot a=(ba\cdot z)a=(ab\cdot z)\cdot aa=(ab\cdot a)b=x$ and the solution is unique.
\end{proof}


\section{Translatable pentagonal quasigroups}\setcounter{section}{4}\setcounter{theorem}{0}

Recall a quasigroup $(Q,\cdot)$, with $Q=\{1,2,\ldots,n\}$ and $1\leqslant k<n$, is {\it $k$-translatable} if its multiplication table is obtained by the following rule: If the first row of the multiplication table is $a_1,a_2,\ldots,a_n$, then the $q$-th row is obtained from the $(q-1)$-st row by taking the last $k$ entries in the $(q-1)-$st row and inserting them as the first $k$ entries of the $q$-th row and by taking the first $n-k$ entries of the $(q-1)$-st row and inserting them as the last $n-k$ entries of the $q$-th row, where $q\in\{2,3,\ldots,n\}$. The multiplication in a $k$-translatable quasigroup is given by the formula $i\cdot j=[i+1]_n\cdot [j+k]_n=a_{{k-ki+j}_n}$ (cf. \cite{DM1,DM2} or \cite{DM3}). Moreover, Lemma 9.1 in \cite{DM1} shows that a quasigroup of the form $x\cdot y=[ax+by]_n$ is $k$ translatable only for $k$ such that $[a+kb]_n=0$. Thus, a pentagonal quasigroup induced by $\mathbb{Z}_n$ can be $k$-translatable only for $k\in\{2,3,\ldots,n-2\}$. 

\begin{theorem}\label{T-trans}
Every pentagonal quasigroup induced by $\mathbb{Z}_n$ is $k$-translatable for some $k>1$ such that $(k,n)=1$. If it has the form $x\cdot y=[ax+(1-a)y]_n$, then is $k$-translatable for $k=[1-a^3-a]_n$.
\end{theorem}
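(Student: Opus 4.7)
The plan is to apply the criterion mentioned in the paragraph preceding the theorem: by Lemma 9.1 of \cite{DM1}, a quasigroup of the form $x\cdot y=[ax+by]_n$ is $k$-translatable precisely when $[a+kb]_n=0$. Specialising to $b=1-a$, the pentagonal quasigroup $x\cdot y=[ax+(1-a)y]_n$ is $k$-translatable iff
\[
[a+k(1-a)]_n=0.
\]
Since Theorem~\ref{main2} guarantees $(a-1,n)=1$, this linear congruence has a unique solution for $k$ modulo $n$, so the entire content of the second sentence reduces to verifying that $k=[1-a^3-a]_n$ is that solution.

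First I would carry out the short algebraic verification. Expanding,
\[
a+(1-a^3-a)(1-a)=a+1-a-a^3+a^4-a+a^2=a^4-a^3+a^2-a+1,
\]
which is $\equiv 0\pmod n$ by \eqref{e7}. This already establishes $k$-translatability for the specified $k$, and so also proves the existence claim in the first sentence.

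Next I would confirm the side conditions on $k$. For $(k,n)=1$: any common divisor $d$ of $k$ and $n$ divides $k(1-a)$, hence divides $k(1-a)+a\cdot 1=a+k(1-a)+(-k(1-a))$ ... more cleanly, from $a\equiv -k(1-a)\pmod n$ we see that $d\mid n$ and $d\mid k$ force $d\mid a$, whence $d=1$ because $(a,n)=1$. For $k>1$: if $k\equiv 0\pmod n$ the translatability equation yields $a\equiv 0\pmod n$, contradicting $1<a<n$; if $k\equiv 1\pmod n$ it yields $1\equiv 0\pmod n$, which is impossible for $n>2$. Hence the canonical representative of $k$ lies in $\{2,3,\ldots,n-1\}$, in particular $k>1$.

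There is no real obstacle here; the only mildly non-obvious point is spotting the identity $a+(1-a^3-a)(1-a)=a^4-a^3+a^2-a+1$, which is exactly the left-hand side of \eqref{e7}. This also handles the exceptional case $a=n-1$ from Theorem~\ref{T-Zn} uniformly, since that quasigroup still has the form $x\cdot y=[ax+(1-a)y]_n$ with $(a,n)=(a-1,n)=1$ and $a$ satisfying \eqref{e7}, so the same computation applies.
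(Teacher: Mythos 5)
Your proof is correct and follows essentially the same route as the paper: verify $a+(1-a^3-a)(1-a)\equiv a^4-a^3+a^2-a+1\equiv 0\pmod n$ and invoke Lemma 9.1 of [DM1], then deduce $(k,n)=1$ from the translatability congruence together with $(a,n)=1$. Your coprimality argument (reading $a\equiv -k(1-a)$ directly) is in fact a bit cleaner than the paper's, and your explicit check that $k>1$ fills in a detail the paper leaves to the preceding paragraph.
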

\begin{proof}
Indeed, by Theorem \ref{T-Zn}, $x\cdot y=[ax+(1-a)y]_n$ and $[a^4-a^3+a^2-a+1]_n=0$. Thus, $[a+(-a^3-a+1)(1-a)]_n=0$, which, by Lemma 9.1 from \cite{DM1}, means that this quasigroup is $k$-translatable. Since $k+nt=1-a(a^2+1)=-a^2(a^2+1)$ and $(a,n)=1$, each prime divisor of $k$ and $n$ is a divisor of $a$, which is impossible. So, $(k,n)=1$.
\end{proof}

\begin{theorem}\label{T-trans2} A groupoid $(Q,\cdot)$ of order $n$ is a $k$-translatable pentagonal quasigroup, $k>1$, if and only if it is of the form 
$x\cdot y=[ax+(1-a)y]_n$, where 
\begin{eqnarray}\label{w0}
n|m=k^4-2k^3+4k^2-3k+1 \ \ \ {\rm and} \ \ \ a=[-k^3+k^2-3k+1]_n. 
\end{eqnarray}
\end{theorem}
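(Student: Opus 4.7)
The plan is to reduce both directions to Theorem \ref{main2} (the linear form of $\mathbb{Z}_n$-induced pentagonal quasigroups) combined with Theorem \ref{T-trans} (the $k$-translatability criterion), using the single algebraic bridge $a(k-1)\equiv k\pmod{n}$.

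For the forward direction, I would start from the closed-form expression $i\cdot j=a_{[k-ki+j]_n}$ valid for any $k$-translatable quasigroup. Since a pentagonal quasigroup is idempotent, setting $i\cdot i=i$ yields $a_{[k+(1-k)i]_n}=i$ for all $i$; since $a_1,\ldots,a_n$ is a permutation, the map $i\mapsto[k+(1-k)i]_n$ must be a bijection, forcing $(k-1,n)=1$. Solving for $a_j$ and substituting back gives $x\cdot y=[ax+(1-a)y]_n$ with $a\equiv k(k-1)^{-1}\pmod{n}$. Multiplying the pentagonal identity \eqref{e7} by $(k-1)^4$ and using $a(k-1)\equiv k$ turns it into $k^4-k^3(k-1)+k^2(k-1)^2-k(k-1)^3+(k-1)^4\equiv 0\pmod{n}$, and direct expansion shows the left side equals $m$; hence $n\mid m$. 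Finally, the polynomial identity $(-k^3+k^2-3k+1)(k-1)=k-m$ together with $n\mid m$ and $(k-1,n)=1$ pins down $a=[-k^3+k^2-3k+1]_n$.

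For the reverse direction, the same identity $(-k^3+k^2-3k+1)(k-1)=k-m$ immediately yields $a(k-1)\equiv k\pmod{n}$ from $n\mid m$. Short divisibility arguments (any common prime divisor of $n$ with $k-1$, $k$, $a$, or $a-1$ forces a divisor of $1$) produce the four coprimality conditions $(k-1,n)=(k,n)=(a,n)=(a-1,n)=1$. Multiplying $a^4-a^3+a^2-a+1$ by $(k-1)^4$ and substituting $a(k-1)\equiv k$ reproduces $m\equiv 0\pmod{n}$; combined with $(k-1,n)=1$ this yields \eqref{e7}, so Theorem \ref{main2} confirms that $(Q,\cdot)$ is a pentagonal quasigroup. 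Theorem \ref{T-trans} then asserts that it is $k'$-translatable with $k'=[1-a^3-a]_n$; multiplying $a^3+a+k-1$ by $(k-1)^3$ and substituting $a(k-1)\equiv k$ gives $k^3+k(k-1)^2+(k-1)^4=m\equiv 0\pmod{n}$, so $k'=k$.

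The only real work is verifying the three polynomial identities in $\mathbb{Z}[k]$ by direct expansion: $(-k^3+k^2-3k+1)(k-1)=k-m$, $\sum_{i=0}^{4}(-1)^i k^{4-i}(k-1)^i=m$, and $k^3+k(k-1)^2+(k-1)^4=m$. Conceptually all three encode the single observation that $-k^3+k^2-3k+1\equiv k(k-1)^{-1}\pmod{m}$ in $\mathbb{Z}[k]$, i.e., the specific cubic that appears in the theorem is just a polynomial representative of the rational function $k/(k-1)$ reduced modulo $m$; once this is recognised, the theorem becomes a clean translation between $a$ and $k$ with no hidden subtlety.
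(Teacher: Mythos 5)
Your proposal is correct; all three polynomial identities check out ($(-k^3+k^2-3k+1)(k-1)=k-m$, $\sum_{i=0}^{4}(-1)^ik^{4-i}(k-1)^i=m$, and $k^3+k(k-1)^2+(k-1)^4=m$), and the coprimality arguments are sound. The overall skeleton matches the paper's: both proofs hinge on the linear form $x\cdot y=[ax+(1-a)y]_n$ together with the translatability relation $a(k-1)\equiv k\pmod n$ (the paper's \eqref{w1}) and on reducing \eqref{e7} to $n\mid m$. Where you genuinely diverge is in the computational core and in the provenance of the linear form. The paper imports the linear form from Theorem 4.2 of \cite{DM3}, whereas you re-derive it inline from idempotency and the formula $i\cdot j=a_{[k-ki+j]_n}$, which makes the argument self-contained and, as a bonus, delivers $(k-1,n)=1$ for free. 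More significantly, the paper's proof in both directions is a long sequential chain of modular identities ($[ka^3]_n$, $[k^2a]_n$, $[k^2a^2]_n$, $[a^2]_n$, $[ka^2]_n$, \dots), each derived from the previous ones, while you replace the entire chain by clearing denominators: multiply the relevant relation by $(k-1)^4$ or $(k-1)^3$, substitute $a(k-1)\equiv k$, and verify a single polynomial identity in $\mathbb{Z}[k]$. Your method is less error-prone (the paper's forward direction in fact contains a sign slip, $-4k^2$ for $+4k^2$, of exactly the kind your identity check would catch) and it exposes the conceptual content -- that $-k^3+k^2-3k+1$ is just the polynomial representative of $k/(k-1)$ modulo $m$ -- which the paper's computation obscures. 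The only cost is that you must separately justify dividing by $(k-1)$ at the end of each direction, which you do correctly via $(k-1,n)=1$.
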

\begin{proof} Suppose that $(Q,\cdot)$ is a $k$-translatable pentagonal quasigroup of order $n$. By Theorem 4.2 of \cite{DM3} and Lemma 9.1 of \cite{DM1} it is of the form $x\cdot y=[ax+(1-a)y]_n$ and $[a+(1-a)k]_n=0$, where $1<a<n$, $(a,n)=(a-1,n)=1$.  Thus
\begin{eqnarray}\label{w1}
[a+k]_n=[ka]_n \ \ \ \ {\rm and} \ \ \ \ k=[(k-1)a]_n.
\end{eqnarray}
By Theorem \ref{T-trans}, $k=[1-a^3-a]_n$. Therefore, $[a^3]_n=[1-a-k]_n$. So,
\begin{eqnarray}\label{w2}
[ka^3]_n=[k-ka-k^2]_n=[-a-k^2]_n.
\end{eqnarray}
By \eqref{e8}, we also have $[a^5]_n=[-1]_n$. Thus,
\begin{equation*}
[ka^3]_n\stackrel{\eqref{w1}}{=}[(k-1)a^4]_n, \ \ [ka^4]_n=[1-k]_n, \ \ [ka]_n=[(k-1)a^2]_n, \ \ [ka^2]_n=[(k-1)a^3]_n.
\end{equation*}
Therefore, using pentagonality and the above identities, we obtain
$$\arraycolsep=.5mm
\begin{array}{rlll}
0&=[(k-1)(a^4-a^3+a^2-a+1)]_n\\[3pt]
&=[(k-1)a^4-(k-1)a^3+(k-1)a^2-(k-1)a+(k-1)]_n\\
&=[ka^3-a^2-1]_n\stackrel{\eqref{w1}}{=}[-a-k^2-a^2-1]_n.
\end{array}
$$
Hence $[a^2]_n=[-a-k^2-1]_n$, and in the consequence
$$
[a+k]_n=[ka]_n=[(k-1)a^2]_n=[(k-1)(-a-k^2-1)]_n=[-k^3+k^2-2k+1]_n,
$$
which implies the second equation of \eqref{w0}.

The first equation follows from the fact that
$$
0=[a+k-ka]_n=[k^4-2k^3-4k^2-3k+1]_n.
$$

Conversely, let $(Q,\cdot)$ be a groupoid of order $n>1$ with $x\cdot y=[ax+(1-a)y]_n$, where $n$ and $a$ are as in \eqref{w0}.
Then $(a,n)=(a-1,n)=1$. Indeed, each a prime divisor $p$ of $a$ and $n$ is a divisor of $m-a=k^2(k^2-k+3)$. If $p|k$, then, by \eqref{w0}, $p|1$, a contradiction. So, $p|(k^2-k+3)$ and $p/\!\! |\,n$, but then $p|k(k^2-k+3)=1-a$. This also is impossible. Hence $(a,n)=1$. Similarly $(a-1,n)=1$. Thus $1<a<n$ and, in the consequence, $(Q,\cdot)$ is a quasigroup. Since $[a+k(1-a)]_n=0$, by Lemma 9.1 from \cite{DM1}, it is $k$-translatable. This implies \eqref{w1}.

Now, using \eqref{w1} and \eqref{w0}, we obtain
\begin{eqnarray}\label{w3}
[k^2a]_n=[k(ka)]_n=[k(k+a)]_n=[k^2+k+a]_n\stackrel{\eqref{w0}}{=}[-k^3+2k^2-2k+1]_n
\end{eqnarray}

\noindent
and

$\arraycolsep=.5mm
\begin{array}{rlrr}
[k^2a^2]_n=&[(k^2a)a]_n\stackrel{\eqref{w3}}{=}[-k^2(k+a)+2k(k+a)-2ka+a]_n\\[2pt]
=&[-k^3-k^2a+2k^2+a]_n\stackrel{\eqref{w3}}{=}[-k^3-k^2-k-a+2k^2+a]_n\\[4pt]
=&[-k^3+k^2-k]_n .
\end{array}
$

\medskip\noindent
That is,
\begin{eqnarray*}\label{w4}
[k^2a^2]_n=[-k^3+k^2-k]_n \ \ \ {\rm and } \ \ \ [k^3a^2]_n=[-k^4+k^3-k^2]_n .
\end{eqnarray*}
Then \\

\arraycolsep=.5mm
$\begin{array}{rllll}
[a^2]_n&\stackrel{\eqref{w0}}{=}[-kk^2a+k^2a-3ka+a]_n\\
&\stackrel{\eqref{w2},\eqref{w1}}{=}\!\![(k^4-2k^3+2k^2-k)+(-k^3+2k^2-2k+1)+(-3k-3a+a)]_n\\
&\stackrel{\eqref{w0}}{=}[-k^3-3k-2a]_n\stackrel{\eqref{w0}}{=}[k^3-2k^2+3k-2]_n.
\end{array}
$

\medskip\noindent
Consequently,
\begin{eqnarray*}\label{w5}
&&[ka^2]_n= [k^4-2k^3+3k^2-2k]_n\stackrel{\eqref{w0}}{=}[-k^2+k-1]_n.
\end{eqnarray*}
Now, using the above identities, we obtain
\begin{eqnarray*}
&&[a^3]_n= [-k^3a^2+k^2a^2-3ka^2+a^2]_n=[k^3-k^2+2k]_n\stackrel{\eqref{w0}}{=}[1-a-k]_n.
\end{eqnarray*}
Therefore, $[a^4]_n=[a-a^2-ak]_n$  and 
$$
[a^4-a^3+a^2-a+1]_n=[1-ak-a^3]_n=[a+k(1-a)]_n=0,
$$
which, by Theorem \ref{T-Zn}, shows that $(Q,\cdot)$ is a pentagonal quasigroup. 
\end{proof}

\begin{corollary}
For every $k>1$ there exist at least one $k$-translatable pentagonal quasigroup.
\end{corollary}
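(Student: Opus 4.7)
The plan is to apply the converse direction of Theorem \ref{T-trans2} with the natural choice $n = m := k^4 - 2k^3 + 4k^2 - 3k + 1$, so that the divisibility $n \mid m$ is trivial. It then remains only to exhibit an integer $a$ with $1 < a < n$ such that $a \equiv -k^3 + k^2 - 3k + 1 \pmod{n}$, and to invoke the cited theorem for everything else.

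First I would observe that $-k^3 + k^2 - 3k + 1$ is strictly negative for every $k \geq 2$, so adding $m$ once produces the candidate representative
\[
a = k^4 - 3k^3 + 5k^2 - 6k + 2.
\]
The only real work, such as it is, will be verifying the strict inequalities $1 < a < n$. The upper bound $a < n$ is equivalent to $-k^3 + k^2 - 3k + 1 < 0$, already noted. The lower bound $a > 1$ reduces to $k^4 - 3k^3 + 5k^2 - 6k + 1 > 0$; this is checked by direct substitution at $k = 2$ (the value is $1$), and for $k \geq 3$ the term $k^4 - 3k^3 = k^3(k-3)$ is non-negative while $5k^2 - 6k + 1 = (5k-1)(k-1)$ is strictly positive, so the whole expression is positive.

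With $n$ and $a$ in place, the converse direction of Theorem \ref{T-trans2} applies verbatim: the coprimality conditions $(a,n) = (a-1,n) = 1$ are extracted inside that proof from the factorisation $m - a = k^2(k^2 - k + 3)$, and one concludes that $x \cdot y = [ax + (1-a)y]_n$ is a $k$-translatable pentagonal quasigroup. Thus the construction is completely explicit and no substantial obstacle arises beyond the short polynomial-inequality check above; indeed the small-$k$ data ($k=2$ yielding $n=11$, $a=2$; $k=3$ yielding $n=55$, $a=29$) illustrates that the recipe produces admissible quasigroups immediately.
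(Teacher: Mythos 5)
Your proposal is correct and follows essentially the same route as the paper: both take $n=m=k^4-2k^3+4k^2-3k+1$ with $a=[-k^3+k^2-3k+1]_n$ and invoke the converse direction of Theorem \ref{T-trans2}. Your explicit verification of $1<a<n$ is a harmless (indeed slightly more careful) addition, since that inequality already follows from the coprimality conditions $(a,n)=(a-1,n)=1$ established inside the theorem's proof; the paper's version merely adds the further remark that every divisor of $m$ also yields such a quasigroup.
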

\begin{proof}
One $k$-translatable pentagonal quasigroup is defined by Theorem \ref{T-trans2}. In this quasigroup $a$ and $n$ are as in \eqref{w0}. If $m$ is a divisor of $n$ and $a=[b]_n$, then $a=[b]_m$. Thus $(\mathbb{Z}_m,\cdot)$ with $x\cdot y=[a'x+(1-a')y]_m$, $a'=[-k^3+k^2-3k+1]_m$, also is a $k$-translatable pentagonal quasigroup.
\end{proof}

According to Theorem \ref{T-trans2} for $k=2$, we have $n=m=11$ and $a=2$. So for $k=2$ there is only one $k$-translatable pentagonal quasigroup induced by $\mathbb{Z}_n$. It has the form $x\cdot y=[2x+10y]_{11}$. For $k=3$, $m=55$, $a=[29]_n$ and $n|m$ there are three $k$-translatable pentagonal quasigroups induced by $\mathbb{Z}_n$. They have the form: $x\cdot y=[29x+27y]_{55}$, $x\cdot y=[7x+5y]_{11}$ and $x\cdot y=[4x+2y]_5$. Other calculations for $k\leqslant 20$ are presented below.

$${\small
\begin{array}{|c|l|l|c|c|}\hline
k&x\cdot y\\ \hline
2&[2x+10y]_{11}\\ \hline 
3&[4x+2y]_5, \;[7x+5y]_{11},\;[29x+27y]_{55}\\ \hline
4&[122x+60y]_{181}\\ \hline
5&[347x+115]_{461}\\ \hline
6&[794x+198y]_{991}\\ \hline
7&[27x+5y]_{31},\;[52x+10y]_{61},\;[1577x+315]_{1891}\\ \hline
8&[190x+472y]_{661},\;[2834x+472y]_{3305}\\ \hline
9&[8x+4y]_{11},\;[308x+184y]_{491},\;[4727x+675y]_{5401}\\ \hline
10&[6x+6y]_{11},\;[593x+169y]_{761},\;[7442x+930y]_{8371}\\ \hline
11&[29x+3y]_{31},\;[362x+40]_{401},\;[11189x+1243y]_{12431}\\ \hline
12&[14x+58y]_{71},\;[138x+114y]_{251},\;[16202x+1620y]_{17821}\\ \hline
13&[25x+17y]_{41},\;[24x+32y]_{55},\;[112x+10y]_{121},\;[189x+17y]_{205},\\
&[354x+252y]_{605},\;[189x+2067y]_{2255},\;[2895x+2067y]_{4961},\;[22739x+2076]_{24805}\\ \hline
14&[472x+2590y]_{3061},\;[31082x+2590y]_{33671}\\ \hline
15&[4x+38y]_{41},\;[79x+1013y]_{1091},\;[41537x+3195y]_{44731}\\ \hline
16&[54434x+3888y]_{58321}\\ \hline
17&[42x+90y]_{131},\;[465x+107y]_{571},\;[70127x+4675y]_{74801}\\ \hline
18&[13350x+5562y]_{18911},\;[88994x+5562y]_{94555}\\ \hline
19&[111437x+6555y]_{117991}\\ \hline
20&[17x+85y]_{101},\;[70x+62y]_{131},\;[118x+994y]_{1111},\;[987x+455y]_{1441},\\ 
&[5572x+7660y]_{13231},\;[137882x+7660y]_{145541}\\ \hline
\end{array}}
$$

Let $\mathbb{Z}_{11}^*$ be the pentagonal quasigroup with the multiplication $x\cdot y=[6x+6y]_{11}$.
By the above result, a finite commutative pentagonal quasigroup is the direct product of $m$ copies of $\mathbb{Z}_{11}^*$ but for $m>1$, as it is shown below, they are not translatable.

\begin{theorem}
$\mathbb{Z}_{11}^*$ is the only translatable commutative pentagonal quasigroup.
\end{theorem}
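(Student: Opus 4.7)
My plan is to combine Theorem \ref{T-trans2} with the commutativity constraint, and then extract numerical conditions on $n=|Q|$ from the pentagonal identity that force $n=11$.

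First I would dispose of $1$-translatability by invoking idempotency. The general translatability formula $i\cdot j=a_{[k-ki+j]_n}$ specializes on the diagonal to $i\cdot i=a_{[k-(k-1)i]_n}$, which at $k=1$ collapses to the constant $a_1$; this is incompatible with $i\cdot i=i$ for $n>1$. Hence any translatable pentagonal quasigroup is $k$-translatable for some $k\geq 2$, so Theorem \ref{T-trans2} applies and the quasigroup must have the form $xy=[ax+(1-a)y]_n$ with $1<a<n$ and $a^4-a^3+a^2-a+1\equiv 0\pmod n$.

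Next I would impose commutativity: $xy=yx$ yields $(2a-1)(x-y)\equiv 0\pmod n$ for all $x,y$, and taking $x-y=1$ gives $2a\equiv 1\pmod n$, so in particular $n$ is odd. The decisive step is a single clever scaling: multiplying the pentagonal congruence by $16=2^4$ pairs each power of $a$ with the matching power of $2$, yielding
$$16(a^4-a^3+a^2-a+1)=(2a)^4-2(2a)^3+4(2a)^2-8(2a)+16\equiv 1-2+4-8+16=11\pmod n.$$
Hence $n\mid 11$; combined with $n>1$ this forces $n=11$, and then $2a\equiv 1\pmod{11}$ forces $a=6$, so $(Q,\cdot)=\mathbb{Z}_{11}^*$. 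The converse (that $\mathbb{Z}_{11}^*$ is commutative pentagonal and $10$-translatable) is immediate: commutativity and pentagonality come from Proposition \ref{Zn-com}, and $10$-translatability is read off the row $k=10$ of the table preceding the statement.

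I do not foresee a real obstacle. The one non-routine insight is spotting the factor $2^4$, which rewrites the pentagonal polynomial in $a$ as the palindromic polynomial $1-2+4-8+16$ evaluated at $2a$, letting the commutativity relation $2a\equiv 1$ eliminate $a$ outright and reduce the problem to a one-line divisibility.
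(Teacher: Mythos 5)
Your proof is correct, but it takes a genuinely different route from the paper's. The paper first applies Proposition \ref{Zn-com} to pin the order down to $11^m$, observes that a commutative translatable quasigroup must be $(n-1)$-translatable, and then, for $m>1$, runs an explicit induction on an ordering $x^{(1)},\dots,x^{(n)}$ of the underlying group $(\mathbb{Z}_{11})^m$ to show that $(n-1)$-translatability would force $x^{(12)}=x^{(1)}$, a contradiction. You instead rule out $k=1$ by idempotency (correctly: the formula $i\cdot i=a_{[k-(k-1)i]_n}$ degenerates to the constant $a_1$ when $k=1$), invoke the structural Theorem \ref{T-trans2} to force the linear form $[ax+(1-a)y]_n$ over the cyclic group (the congruence $[a^4-a^3+a^2-a+1]_n=0$ then comes from Theorem \ref{main2}), and finish with the scaling identity $16(a^4-a^3+a^2-a+1)=(2a)^4-2(2a)^3+4(2a)^2-8(2a)+16\equiv 1-2+4-8+16=11\pmod n$ under $2a\equiv 1\pmod n$; this identity checks out and yields $n\mid 11$, hence $n=11$ and $a=6$. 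Your route buys a short, purely arithmetic finish and does not even need Proposition \ref{Zn-com} for the uniqueness direction (only for the converse), and it simultaneously shows that commutativity alone, among all translatable pentagonal quasigroups, pins down $n$ and $a$. The cost is that it leans on the heavier machinery behind Theorem \ref{T-trans2} (Theorem 4.2 of \cite{DM3}), which asserts that every $k$-translatable pentagonal quasigroup with $k>1$ is linear over $\mathbb{Z}_n$, whereas the paper's induction for $m>1$ works directly with the non-cyclic group and avoids that reduction.
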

\begin{proof}
Let $(Q,\cdot)$ be a commutative pentagonal quasigroup. By definition, an infinite quasigroup cannot be translatable. So, $(Q,\cdot)$ must be finite. By Proposition \ref{Zn-com} its order is $n=11^m$.

If $m=1$, then, by Proposition \ref{Zn-com}, the multiplication of $(Q,\cdot)$ has the form $x\cdot y=[6x+6y]_{11}$. From the multiplication table of this quasigroup it follows that it is $k$-translatable for $k=10=n-1$. So, for $m=1$, our theorem is valid.

Now let $m>1$ and $(Q,\cdot)$ be $(n-1)$-translatable. According to Lemma 2.7 in \cite{DM2}, we can assume that $Q$ is ordered in the following way: 
$x^{(1)},x^{(2)},x^{(3)},\ldots,x^{(n)}$, where $x^{(1)}=(1,0,0,\ldots,0)$. Then the multiplication table of $(Q,\cdot)$ has the form 

$$\begin{array}{|c|c|c|c|c|c|c|c||c} \hline
\rule{0pt}{12pt}\cdot&x^{(1)}&x^{(2)}&x^{(3)}&\ldots\ldots&x^{(n)}\\ \hline
\rule{0pt}{12pt}x^{(1)}&x^{(1)}\cdot x^{(1)}&x^{(1)}\cdot x^{(2)}&x^{(1)}\cdot x^{(3)}&\ldots\ldots&x^{(1)}\cdot x^{(n)}\\ \hline
\rule{0pt}{12pt}x^{(2)}&x^{(2)}\cdot x^{(1)}&x^{(2)}\cdot x^{(2)}&x^{(2)}\cdot x^{(3)}&\ldots\ldots&x^{(2)}\cdot x^{(n)}\\ \hline
\rule{0pt}{12pt}x^{(3)}&x^{(3)}\cdot x^{(1)}&x^{(3)}\cdot x^{(2)}&x^{(3)}\cdot x^{(3)}&\ldots\ldots&x^{(3)}\cdot x^{(n)}\\ \hline
\ldots\ldots&\ldots\ldots&\ldots\ldots&\ldots\ldots&\ldots\ldots&\ldots\ldots\\[3pt] \hline
\ldots\ldots&\ldots\ldots&\ldots\ldots&\ldots\ldots&\ldots\ldots&\ldots\ldots\\[3pt] \hline
\end{array}
$$ 
Since $(Q,\cdot)$ is $(n-1)$-translatable, $x^{(2)}\cdot x^{(t)}=x^{(1)}\cdot x^{(t+1)}$ for all $t\in\mathbb{Z}_n$.

Let $x^{(2)}=(a_1,a_2,\ldots,a_m)$. We will prove by induction that 
$$
x^{(t+1)}=(ta_1-(t-1),ta_2,ta_3,\ldots,ta_m) \ \ \ \forall  t\in\mathbb{Z}_n.
$$ 
The induction hypothesis is clearly true, by definition, for $t=1$. Assume that the induction hypothesis is true for all $s\leqslant t$. Then
$$
x^{(t)}=((t-1)a_1-(t-2),(t-1)a_2,(t-1)a_3,\ldots,(t-1)a_m).
$$

Suppose that $x^{(t+1)}=(z_1,z_2,z_3,\ldots,z_m)$. Since $x^{(2)}\cdot x^{(t)}=x^{(1)}\cdot x^{(t+1)}$,
we have $6x^{(2)}+6x^{(t)}=6x^{(1)}+6x^{(t+1)}$. The last expression means that
$$
(6a_1+6(t-1)a_1-6(t-2),6ta_2,6ta_3,\ldots,6ta_m)=(6+6z_1^{(t)},6z_2^{(t)},6z_3^{(t)},\ldots,6z_m^{(t)}).
$$
Hence $6z_1^{(t)}=6\big(ta_1-(t-1)\big),$ which implies $z_1^{(t)}=ta_1-(t-1).$ Also $z_s^{(t)}=ta_s$ for all $s=2,3,\ldots,m$. So, $x^{(t+1)}=(ta_1-(t-1),ta_2,ta_3,\ldots,ta_m)$, as required.

Now, $x^{(12)}=(11a_1-10,11a_2,11a_3,\ldots,11a_m)=(-10,0,0,\ldots,0)=x^{(1)}$, a contradiction because all $x^{(1)},x^{(2)},x^{(3)},\ldots,x^{(n)}$ are different. So, for $m>1$ a quasigroup $(Q,\cdot)$ cannot be $(n-1)$-translatable.
\end{proof}

Suppose that $(G,\cdot)$ is a commutative pentagonal quasigroup and $a,b$ are two distinct elements of $G$. Then it is straighforward to prove that $a$ and $b$ generate the subquasigroup 
$$
\langle a,b\rangle =\{a,\,b,\,ab,\,aba,\,bab,\,aba\!\cdot\!a,\,aba\!\cdot\!b,\, bab\!\cdot\!a,\, bab\!\cdot\!b,\, (aba\!\cdot\!a)b,\,(bab\!\cdot\!b)a\}
$$
and that $\langle a,b\rangle$ is isomorphic to $\mathbb{Z}_{11}^*$. Then we take $c\not\in\langle a,b\rangle$, if $c$ exists.

\begin{lemma} $\langle a,b\rangle\cap\langle b,c\rangle=\{b\}$.
\end{lemma}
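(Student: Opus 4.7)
The containment $\{b\} \subseteq \langle a,b\rangle \cap \langle b,c\rangle$ is immediate from the definitions, so it suffices to prove the reverse inclusion. My plan is to argue by contradiction: assume there exists $d \in \langle a,b\rangle \cap \langle b,c\rangle$ with $d \neq b$, and derive that $c$ must already lie in $\langle a,b\rangle$, contradicting the choice of $c$.

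The key observation to invoke is the statement made immediately before the lemma, namely that in a commutative pentagonal quasigroup any two distinct elements $u,v$ generate a subquasigroup $\langle u,v\rangle$ consisting of exactly the eleven listed words in $u$ and $v$, and that this subquasigroup is isomorphic to $\mathbb{Z}_{11}^\ast$. Applied to the pair $(b,d)$, this tells us that $\langle b,d\rangle$ has exactly $11$ elements and is isomorphic to $\mathbb{Z}_{11}^\ast$; applied also to $(a,b)$ and $(b,c)$, it tells us that both $\langle a,b\rangle$ and $\langle b,c\rangle$ have exactly $11$ elements.

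Now, since $b$ and $d$ both lie in the subquasigroup $\langle a,b\rangle$, and a subquasigroup is closed under the operation, every one of the eleven words generating $\langle b,d\rangle$ lies in $\langle a,b\rangle$; hence $\langle b,d\rangle \subseteq \langle a,b\rangle$. Because both sides have cardinality $11$, we conclude $\langle b,d\rangle = \langle a,b\rangle$. The same argument, with $c$ in place of $a$, gives $\langle b,d\rangle = \langle b,c\rangle$. Combining these, $\langle a,b\rangle = \langle b,c\rangle$, and in particular $c \in \langle a,b\rangle$, which contradicts the hypothesis that $c \notin \langle a,b\rangle$. Therefore no such $d$ exists, and the intersection equals $\{b\}$.

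The main (only) obstacle is the cleanness with which one can use the ``any two distinct elements generate $\mathbb{Z}_{11}^\ast$'' claim; once that is in hand the proof is just a cardinality squeeze, so I would simply cite the sentence preceding the lemma rather than reprove it.
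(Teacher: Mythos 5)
Your proof is correct and follows essentially the same route as the paper's: both reduce to the fact that any two distinct elements of a commutative pentagonal quasigroup generate an $11$-element copy of $\mathbb{Z}_{11}^*$, so a second common element $d\neq b$ would force $\langle a,b\rangle=\langle b,d\rangle=\langle b,c\rangle$ and hence $c\in\langle a,b\rangle$, a contradiction. The only cosmetic difference is that you obtain $\langle b,d\rangle=\langle a,b\rangle$ by a cardinality squeeze, whereas the paper reads off from the multiplication table of $\mathbb{Z}_{11}^*$ that any two distinct elements generate the whole quasigroup.
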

\begin{proof} From the multiplication table of $\mathbb{Z}_{11}^*$ we see that any two distinct elements generate $\mathbb{Z}_{11}^*$. Hence, $\langle a,b\rangle\cap\langle b,c\rangle$  cannot contain $b$ and another element of $\langle a,b\rangle\cap\langle b,c\rangle$, or else $c\in\langle a,b\rangle=\langle b,c\rangle$, a contradiction.   
\end{proof}
\begin{theorem} $H=\langle a,b\rangle\langle b,c\rangle$ is a commutative pentagonal subquasigroup of $(G,\cdot)$ isomorphic to $\,\mathbb{Z}_{11}^*\times\mathbb{Z}_{11}^*$.
\end{theorem}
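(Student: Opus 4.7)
The plan is to work in the additive language provided by Proposition \ref{Zn-com}: $(G,+)$ is an abelian group of exponent $11$, hence an $\mathbb{F}_{11}$-vector space, and $x\cdot y=6x+6y$. Since $6+6\equiv 1\pmod{11}$, the product $x\cdot y$ is an affine combination of $x$ and $y$, and by iteration the subquasigroup generated by any set $S$ lies in (and equals) the affine hull of $S$ in $(G,+)$. In particular $\langle a,b\rangle=b+\mathbb{F}_{11}(a-b)$ and $\langle b,c\rangle=b+\mathbb{F}_{11}(c-b)$, each an affine line of $11$ elements through $b$, consistent with the already established fact that these subquasigroups are copies of $\mathbb{Z}_{11}^*$. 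The hypothesis $c\notin\langle a,b\rangle$ translates to $c-b\notin\mathbb{F}_{11}(a-b)$, so $a-b$ and $c-b$ are linearly independent over $\mathbb{F}_{11}$.

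Next I would compute $H$ directly. For $u=b+s(a-b)$ and $v=b+t(c-b)$ one has $u\cdot v=6u+6v=12b+6s(a-b)+6t(c-b)=b+6s(a-b)+6t(c-b)$, and as $s,t$ range over $\mathbb{F}_{11}$ so do $6s,6t$ (since $6$ is invertible modulo $11$). Hence $H=b+\mathbb{F}_{11}(a-b)+\mathbb{F}_{11}(c-b)$ is the affine plane spanned by $\langle a,b\rangle\cup\langle b,c\rangle$, of cardinality $|H|=11\cdot 11=121$ by the independence above. Closure of $H$ under $\cdot$ then follows from the same affine-combination identity applied to two arbitrary elements $p_i=b+s_i(a-b)+t_i(c-b)$: one computes $p_1\cdot p_2=b+6(s_1+s_2)(a-b)+6(t_1+t_2)(c-b)\in H$. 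Thus $H$ is a subquasigroup of $(G,\cdot)$, and it inherits the commutative, idempotent and pentagonal identities from $G$.

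For the isomorphism I would define $\phi\colon H\to\mathbb{Z}_{11}^*\times\mathbb{Z}_{11}^*$ by $\phi\bigl(b+s(a-b)+t(c-b)\bigr)=(s,t)$; this is well defined and bijective by the linear independence of $a-b$ and $c-b$. Since the operation on $\mathbb{Z}_{11}^*\times\mathbb{Z}_{11}^*$ is coordinatewise, namely $(s_1,t_1)\cdot(s_2,t_2)=(6s_1+6s_2,\,6t_1+6t_2)$, the displayed formula for $p_1\cdot p_2$ immediately yields $\phi(p_1\cdot p_2)=\phi(p_1)\cdot\phi(p_2)$, which completes the proof. The main (and rather minor) obstacle is the clean identification of subquasigroups of a commutative pentagonal quasigroup with affine subspaces of the underlying $\mathbb{F}_{11}$-vector space; once that is settled, everything reduces to routine linear algebra on a two-dimensional affine plane.
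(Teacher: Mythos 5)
Your proof is correct, but it takes a genuinely different route from the paper's. The paper never introduces coordinates: it uses mediality to get $(\langle a,b\rangle\langle b,c\rangle)(\langle a,b\rangle\langle b,c\rangle)\subseteq\langle a,b\rangle\langle b,c\rangle$, the containments $\langle a,b\rangle\subseteq\langle a,b\rangle b$ and $\langle b,c\rangle\subseteq b\langle b,c\rangle$ to conclude $H\supseteq\langle a,b\rangle\cup\{c\}$, hence $11<|H|\leqslant 121$, and then invokes the spectrum result (a finite commutative pentagonal quasigroup has order $11^n$) to force $|H|=121$, after which Proposition \ref{Zn-com} identifies $H$ with $\mathbb{Z}_{11}^*\times\mathbb{Z}_{11}^*$. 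You instead pass to the $\mathbb{F}_{11}$-vector-space structure from Proposition \ref{Zn-com}, observe that $x\cdot y=6x+6y$ is an affine combination, compute the set product explicitly as the affine plane $b+\mathbb{F}_{11}(a-b)+\mathbb{F}_{11}(c-b)$, and write down the isomorphism. The paper's argument is shorter and is the one that iterates cleanly to the later corollaries about $11^3$ and beyond; yours buys an explicit description of $H$ and of the isomorphism, and makes the subsequent generation statements transparent. Two small points you should make explicit: a finite nonempty subset of a quasigroup closed under the operation is automatically a subquasigroup (left and right translations restrict to injections, hence bijections, of the finite set), and your blanket claim that the subquasigroup generated by \emph{any} set equals its affine hull is only justified -- and only needed -- for two-element sets, where it follows from the already established fact that $\langle a,b\rangle\cong\mathbb{Z}_{11}^*$ has exactly $11$ elements.
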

\begin{proof} Since $(G,\cdot)$ is medial, $(\langle a,b\rangle\langle b,c\rangle)(\langle a,b\rangle\langle b,c\rangle)\subseteq\langle a,b\rangle\langle b,c\rangle$. Note that $\langle a,b\rangle\subseteq\langle a,b\rangle b\subseteq\langle a,b\rangle\langle b,c\rangle$ and $\langle b,c\rangle\subseteq b\langle b,c\rangle\subseteq\langle a,b\rangle\langle b,c\rangle$. Hence, the commutative pentagonal quasigroup $H=\langle a,b\rangle\langle b,c\rangle\supseteq\langle a,b\rangle\cup\{c\}$ has more than $11$ elements and less than or equal to $121$ elements. Therefore, as we have already seen, $H$ has $121$ elements and is isomorphic to $\mathbb{Z}_{11}^*\times\mathbb{Z}_{11}^*$.  This completes the proof. 
	\end{proof}
\begin{corollary} $\mathbb{Z}_{11}^*\times\mathbb{Z}_{11}^*$ is generated by three distinct elements.
\end{corollary}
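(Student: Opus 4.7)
The plan is to read this as a direct consequence of the preceding theorem, instantiated at $G = \mathbb{Z}_{11}^*\times\mathbb{Z}_{11}^*$ itself. Viewing $G$ as a commutative pentagonal quasigroup of order $121$, I would pick any two distinct elements $a,b\in G$. By the discussion just before the lemma, $\langle a,b\rangle$ is a subquasigroup isomorphic to $\mathbb{Z}_{11}^*$ and therefore has exactly $11$ elements. Since $|G|=121>11$, there exists $c\in G\setminus\langle a,b\rangle$, which in particular makes $a,b,c$ three distinct elements.

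Next I would invoke the theorem just proved: $H=\langle a,b\rangle\langle b,c\rangle$ is a commutative pentagonal subquasigroup of $G$ isomorphic to $\mathbb{Z}_{11}^*\times\mathbb{Z}_{11}^*$, hence $|H|=121$. On the other hand, both $\langle a,b\rangle$ and $\langle b,c\rangle$ are contained in the subquasigroup $\langle a,b,c\rangle$ generated by $\{a,b,c\}$, and since $\langle a,b,c\rangle$ is closed under $\cdot$, every product of an element of $\langle a,b\rangle$ with an element of $\langle b,c\rangle$ lies in $\langle a,b,c\rangle$. Therefore $H\subseteq\langle a,b,c\rangle\subseteq G$.

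Comparing cardinalities, $121=|H|\leqslant|\langle a,b,c\rangle|\leqslant|G|=121$ forces $\langle a,b,c\rangle=G$, so $G=\mathbb{Z}_{11}^*\times\mathbb{Z}_{11}^*$ is generated by the three distinct elements $a,b,c$. There is no real obstacle here; the only point that needs to be checked is that $\langle a,b,c\rangle$ in fact contains the set-theoretic product $\langle a,b\rangle\langle b,c\rangle$, which is immediate from closure. The corollary is thus essentially a cardinality argument glued onto the previous theorem.
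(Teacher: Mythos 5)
Your argument is correct and is exactly the intended one: the paper states this corollary without proof as an immediate consequence of the preceding theorem, and your cardinality argument ($121=|\langle a,b\rangle\langle b,c\rangle|\leqslant|\langle a,b,c\rangle|\leqslant 121$) is the natural way to spell it out. No issues.
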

\begin{corollary} If $xy=zw\in\langle a,b\rangle\langle b,c\rangle$, then $x=z$ and $y=w$.
\end{corollary}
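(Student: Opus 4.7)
The plan is to argue by a cardinality count that the product map
$$\mu:\langle a,b\rangle\times\langle b,c\rangle\longrightarrow H,\qquad \mu(x,y)=xy,$$
is a bijection, whence injectivity gives exactly the claim $xy=zw\Rightarrow x=z$ and $y=w$.

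First I would note that $\mu$ is surjective by the very definition of $H=\langle a,b\rangle\langle b,c\rangle$ given just before the previous theorem: every element of $H$ is, tautologically, of the form $xy$ with $x\in\langle a,b\rangle$ and $y\in\langle b,c\rangle$. Next, both of the generating subquasigroups are isomorphic to $\mathbb{Z}_{11}^*$ (as shown in the discussion preceding the lemma), so
$$|\langle a,b\rangle\times\langle b,c\rangle|=11\cdot 11=121.$$
On the other hand, the preceding theorem establishes that $|H|=121$ since $H\cong\mathbb{Z}_{11}^*\times\mathbb{Z}_{11}^*$. A surjection between finite sets of equal cardinality is automatically a bijection, so $\mu$ is injective, which is precisely the statement of the corollary.

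There is essentially no obstacle here: all the work has already been done in the previous theorem, which pinned down $|H|=121$ by sandwiching: strictly more than $11$ (since it contains $\langle a,b\rangle\cup\{c\}$) and at most $121=11\cdot 11$ (the maximum possible image size of $\mu$), combined with the earlier structural classification forcing the order to be a power of $11$. Once that count is in hand, the present corollary is just the pigeonhole observation applied to $\mu$, and no further computation inside the quasigroup is required.
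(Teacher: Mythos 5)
Your counting argument is correct and is exactly the intended justification: the paper states this corollary without proof, and the evident reason is that the surjection $(x,y)\mapsto xy$ from the $121$-element set $\langle a,b\rangle\times\langle b,c\rangle$ onto the $121$-element set $H$ (as established in the preceding theorem) must be injective. Nothing further is needed.
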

\begin{corollary} If $d\not\in\langle a,b\rangle\langle b,c\rangle$, then $(\langle a,b\rangle\langle b,c\rangle)\langle b,d\rangle$ is a commutative pentagonal 
quasigroup of order $11^3$ and is isomorphic to $\mathbb{Z}_{11}^*\times\mathbb{Z}_{11}^*\times\mathbb{Z}_{11}^*$.
\end{corollary}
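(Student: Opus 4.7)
Write $K=\langle a,b\rangle\langle b,c\rangle$ and $M=K\langle b,d\rangle$. By the preceding theorem, $K$ is a commutative pentagonal subquasigroup of $(G,\cdot)$ isomorphic to $\mathbb{Z}_{11}^*\times\mathbb{Z}_{11}^*$, hence $|K|=121$. Since $b\ne d$, the paragraph preceding the lemma shows that $\langle b,d\rangle$ is a subquasigroup isomorphic to $\mathbb{Z}_{11}^*$, so $|\langle b,d\rangle|=11$. The plan is to imitate the argument of the previous theorem verbatim, then invoke the spectrum result (Corollary \ref{Cc}) and the structure theorem for commutative pentagonal quasigroups (Proposition \ref{Zn-com}) to pin down the order and the isomorphism type.

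First I would verify that $M$ is closed. By mediality \eqref{e2},
\begin{equation*}
MM=(K\langle b,d\rangle)(K\langle b,d\rangle)=(KK)(\langle b,d\rangle\langle b,d\rangle)\subseteq K\langle b,d\rangle=M,
\end{equation*}
using that $K$ and $\langle b,d\rangle$ are subquasigroups. Because right translation by $b$ is a bijection on $K$ (and on $\langle b,d\rangle$) and $b\cdot b=b$, one has $K=Kb\subseteq K\langle b,d\rangle=M$ and $\langle b,d\rangle=b\langle b,d\rangle\subseteq M$. In particular $d\in M$, while by hypothesis $d\notin K$, so $|M|>|K|=121$. On the other hand, the trivial set-theoretic bound gives $|M|\le|K|\cdot|\langle b,d\rangle|=121\cdot 11=11^3$. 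Since $M$ inherits commutativity, idempotence, mediality and the pentagonal identity from $G$, Corollary \ref{Cc} tells us that $M$ is a commutative pentagonal quasigroup, so its order lies in the spectrum $\{11^n:n=0,1,2,\ldots\}$; combined with $121<|M|\le 11^3$ this forces $|M|=11^3$.

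For the isomorphism, Proposition \ref{Zn-com} says that every commutative pentagonal quasigroup is induced by an abelian group $(M,+)$ of exponent $11$ via $x\cdot y=6x+6y$. An abelian group of exponent $11$ and order $11^3$ is the elementary abelian $11$-group, i.e.\ $(M,+)\cong(\mathbb{Z}_{11})^3$, and consequently $M\cong\mathbb{Z}_{11}^*\times\mathbb{Z}_{11}^*\times\mathbb{Z}_{11}^*$. The only slightly delicate point is the cardinality count: the lower bound $|M|>121$ and the upper bound $|M|\le 11^3$ together with the spectrum lock $|M|$ to exactly $11^3$; everything else is a direct replay of the previous theorem with $\langle b,d\rangle$ replacing $\langle b,c\rangle$, so I expect no further obstacle.
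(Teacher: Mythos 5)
Your argument is correct and is exactly the intended one: the paper leaves this corollary without a separate proof precisely because it is the argument of the preceding theorem replayed with $K=\langle a,b\rangle\langle b,c\rangle$ in the role of $\langle a,b\rangle$ and $\langle b,d\rangle$ in the role of $\langle b,c\rangle$ (mediality for closure, the containments $K\subseteq M$ and $d\in M\setminus K$ for the lower bound, the product bound $|M|\le 11^3$ for the upper bound, and the spectrum from Corollary \ref{Cc} together with Proposition \ref{Zn-com} to fix the order and isomorphism type). No gaps; this matches the paper's approach.
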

\begin{corollary} $\mathbb{Z}_{11}^*\times\mathbb{Z}_{11}^*\times\mathbb{Z}_{11}^*$ is generated by four distinct elements.
\end{corollary}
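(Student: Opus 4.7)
The plan is to apply the preceding corollary inside $G := \mathbb{Z}_{11}^*\times\mathbb{Z}_{11}^*\times\mathbb{Z}_{11}^*$ itself, exhibiting four distinct elements whose descending chain of non-membership conditions forces their generated subquasigroup to exhaust $G$.

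First I would pick any two distinct elements $a,b \in G$, producing $\langle a,b\rangle$ of order $11$. Since $|G| = 11^3 > 11$, I can choose $c \in G \setminus \langle a,b\rangle$, and the earlier theorem on $\langle a,b\rangle\langle b,c\rangle$ guarantees that this set is a subquasigroup of order $11^2$. Since $11^2 < 11^3$, I can then choose $d \in G \setminus \langle a,b\rangle\langle b,c\rangle$, and the preceding corollary delivers $K := (\langle a,b\rangle\langle b,c\rangle)\langle b,d\rangle$ as a subquasigroup of order $11^3 = |G|$, forcing $K = G$. Next, let $S$ denote the subquasigroup of $G$ generated by $\{a,b,c,d\}$. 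Then $S$ contains each of $\langle a,b\rangle$, $\langle b,c\rangle$, $\langle b,d\rangle$, and being closed under $\cdot$ it also contains the subset products $\langle a,b\rangle\langle b,c\rangle$ and hence $(\langle a,b\rangle\langle b,c\rangle)\langle b,d\rangle = K = G$. The reverse inclusion $S \subseteq G$ is immediate, so $S = G$.

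Distinctness of $a,b,c,d$ is automatic from the construction: $a \ne b$ by choice, while $c \notin \langle a,b\rangle$ separates $c$ from $a,b$, and $d \notin \langle a,b\rangle\langle b,c\rangle$ separates $d$ from the other three. The main potential obstacle is merely confirming that this chain of non-membership choices is simultaneously realisable, but that reduces to the strict inequalities $11 < 121 < 1331$, which are trivial. A subtlety worth flagging is that $\langle a,b\rangle\langle b,c\rangle$ is a priori only a set product, becoming a subquasigroup only through the earlier theorem; the preceding corollary then upgrades $(\langle a,b\rangle\langle b,c\rangle)\langle b,d\rangle$ similarly. With those two structural results already in hand, essentially no further calculation is required.
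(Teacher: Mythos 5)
Your proof is correct and is exactly the argument the paper intends: the corollary is stated without proof as an immediate consequence of the preceding corollary, and your write-up simply makes explicit the choice of $c\notin\langle a,b\rangle$ and $d\notin\langle a,b\rangle\langle b,c\rangle$ and the observation that the subquasigroup generated by $\{a,b,c,d\}$ contains $(\langle a,b\rangle\langle b,c\rangle)\langle b,d\rangle$, which has order $11^3$ and hence is all of $G$.
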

\begin{corollary} The direct product of $n$ copies of $\mathbb{Z}_{11}^*$ is generated by $n+1$ distinct elements.
\end{corollary}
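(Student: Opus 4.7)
The plan is induction on $n$. The base case $n=1$ is the observation already recorded in the proof of the preceding lemma: any two distinct elements of $\mathbb{Z}_{11}^*$ generate it (visible from the multiplication table).

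For the inductive step, assume the statement for $n$ and set $G=(\mathbb{Z}_{11}^*)^{n+1}$. Fix a subquasigroup $H\subseteq G$ of order $11^n$; by the inductive hypothesis $H$ is generated by $n+1$ distinct elements, one of which we designate $b$ and the others $a_1,\ldots,a_n$. Pick any $d\in G\setminus H$ and put
\[
K := H\langle b,d\rangle = \{\, hk : h\in H,\; k\in\langle b,d\rangle\,\}.
\]

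By mediality, $(h_1 k_1)(h_2 k_2) = (h_1 h_2)(k_1 k_2)$, so $K$ is closed under the quasigroup product and is therefore a commutative pentagonal subquasigroup of $G$. Since $b$ lies in both $H$ and $\langle b,d\rangle$, and right multiplication by $b$ is a bijection on the finite subquasigroup $H$, we have $H = Hb \subseteq K$; similarly $\langle b,d\rangle = b\langle b,d\rangle \subseteq K$. Thus $K \supseteq H \cup \{d\}$, whence $|K| > 11^n$. Corollary \ref{Cc} forces $|K|$ to be a power of $11$, and $K \subseteq G$ gives $|K| \leq 11^{n+1}$, so $|K| = 11^{n+1}$ and $K = G$. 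The $n+2$ distinct elements $b, a_1, \ldots, a_n, d$ therefore generate $G$, completing the induction.

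The main obstacle is justifying that $K$ is a subquasigroup, which is where mediality does the real work (exactly as in the proof of the preceding corollary), together with the already-established fact that $\langle b,d\rangle$ is itself a subquasigroup isomorphic to $\mathbb{Z}_{11}^*$. Everything downstream is straightforward cardinality bookkeeping against the spectrum $\{11^m:m\geq 0\}$ from Corollary \ref{Cc}; no further structural input is needed.
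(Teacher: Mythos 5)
Your induction is correct and is essentially the paper's own (implicit) argument: the paper builds $\langle a,b\rangle$, then $\langle a,b\rangle\langle b,c\rangle$, then $(\langle a,b\rangle\langle b,c\rangle)\langle b,d\rangle$, adjoining one new generator at each stage and using mediality plus the spectrum $\{11^m\}$ to pin down the order, exactly as you do with $K=H\langle b,d\rangle$. The only points left tacit — that a subquasigroup of order $11^n$ exists in $G$ and is isomorphic to $(\mathbb{Z}_{11}^*)^n$ — follow immediately from idempotency and Proposition \ref{Zn-com}, so the argument stands.
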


\section{Groups inducing pentagonal quasigroups}\setcounter{section}{5}\setcounter{theorem}{0}

Pentagonal quasigroups are very large. Using Theorem \ref{T-trans2} we can determine all pentagonal quasigroups induced by $\mathbb{Z}_n$. Below we present several such quasi\-groups.
For $a=3$ there is only one such quasigroup. It is induced by the group $\mathbb{Z}_{61}$. Its multiplication is defined by $x\cdot y=[3x-2y]_{61}=[3x+59y]_{61}$. This quasigroup is $32$-translatable. For $a=4$ there are three such quasigroups. They are induced by $\mathbb{Z}_5$, $\mathbb{Z}_{41}$, $\mathbb{Z}_{205}$ and are $3$-, $15$-, $138$-translatable, respectively.

{\small $$
\begin{array}{|c|c|c|c|c|c|c|c|c|c|c|c|c|c|c|c|} \hline
\rule{0pt}{10pt}a&2&3&4&5&6&7&8&9\\ \hline
\rule{0pt}{10pt}n&11&61&5,41,205&521&\!11,101,1111\!&\!11,191,2101&11,331,3641\!&\!1181,5905\!\\ \hline
\rule{0pt}{10pt}k&2&32&3,15,138&392&10,\;82,\,890&3,\;33,\;1752&9,\;143,\;3122&444,5168\\ \hline
\end{array}
$$
$$\begin{array}{|c|c|c|c|c|c|c|c|c|c|c|c|c|c|c|c|} \hline
\rule{0pt}{10pt}a&10&11&12&13&14&15&16\\ \hline
\rule{0pt}{10pt}n&9091&13421&\!19141\!&\!2411\!&\!71,101,355,505,7171,35855\!&\!31,1531,47461\!&\!61681\!\\ \hline
\rule{0pt}{10pt}k&8082&12080&\!17402\!&202&83,\,71,\,83,\,273,\;4414,\,33098&\!21,1204,44072\!&\!57570\!\\ \hline
\end{array}
$$ 
$$\begin{array}{|c|c|c|c|c|c|c|c|c|c|c|c|c|c|c|c|} \hline
\rule{0pt}{10pt}a&17&18&19&20\\ \hline
\rule{0pt}{10pt}n&\!71,101,781,1111,7171,78881\!&\!9041,99451\!&\!55,2251,11255,24761,123805\!&\!152381\!\\ \hline
\rule{0pt}{10pt}k&41,\,19,\,538,\,625,\,2242,\,73952&\!3192,93602\!&\!53,2127,4378,\,17884,116928\!&\!144362\!\\ \hline
\end{array}
$$ 
$$\begin{array}{|c|c|c|c|c|c|c|c|c|c|c|c|c|c|c|c|} \hline
\rule{0pt}{10pt}a&21&22&23&24
\\ \hline
\rule{0pt}{10pt}n&\!185641\!&\!224071\!&\!31,41,211,1271,6541,8651,268181\!&\!55,5791,28955,63701,318505\!
\\ \hline
\rule{0pt}{10pt}k&\!176360\!&\!213402\!&\!25,\,28,\;48,\;\,521,\,893,\,5113,\,255992\!&\!13,2267,15108,49854,304658\!
\\ \hline
\end{array}
$$ }

The above table shows that from groups $\mathbb{Z}_n$ for $n<24$ only groups $\mathbb{Z}_5$ and $\mathbb{Z}_{11}$  determine pentagonal quasigroups. To determine other groups of order $n<100$ inducing pentagonal quasigroups observe that from Corollary \ref{C-abel} and Theorem \ref{T-fin} it follows that an Abelian group inducing a pentagonal quasigroup is the direct product of several copies of the group $\mathbb{Z}_5$ or has a regular automorphism $\varphi\ne\varepsilon$ of order $10$. Observe that from Proposition \ref{cprod}, Corollary \ref{C-ord}, Theorem \ref{T-fin} and the table above the possible values of $n<100$ are $n\in \{5,11,16,25,31,40,41,45,55,56,61,71,80,81\}$.

For $n=5$ we have one pentagonal quasigroup, for $n=11$ there are four such quasigroups (see the above table). For $n=16$ we have five Abelian groups of order $16$: $\mathbb{Z}_{16}$, $\mathbb{Z}_2\times\mathbb{Z}_8$, $\mathbb{Z}_2\times\mathbb{Z}_2\times\mathbb{Z}_{4}$, $\mathbb{Z}_4\times\mathbb{Z}_{4}$ and $\mathbb{Z}_2^4$. From the above table it follows that the group $\mathbb{Z}_{16}$ does not induce any pentagonal quasigroup. Groups $\mathbb{Z}_2\times\mathbb{Z}_8$, $\mathbb{Z}_2\times\mathbb{Z}_2\times\mathbb{Z}_{4}$, $\mathbb{Z}_4\times\mathbb{Z}_{4}$ do not have automorphisms of order $10$ (Theorem \ref{T-Hil}), so they cannot be considered as a group inducing pentagonal quasigroups. The group $\mathbb{Z}_2^4$ can be treated as a vector space $V$ over $\mathbb{Z}_2$. Then, by Corollary \ref{C-abel}, interesting for us automorphisms $\varphi$ are linear endomorphisms of $V$ for which $\lambda=-1$ is an eigenvalue of $\varphi^5$. From these endomorphisms we select those satisfying \eqref{e5}. There is $1\,344$ such endomorphisms, so the group $\mathbb{Z}_2^4$ induces $1\,344$ pentagonal quasigroups.

The group $\mathbb{Z}_{25}$ has four elements of order $5$, namely $5$, $10$, $15$ and $20$. Thus $\varphi(5)\in\{10,15,20\}$. Therefore $\varphi$ restricted to the set $\{5,10,15,20\}$ has the form $\varphi(x)=ax$, where $a\in\{2,3,4\}$, but such $\varphi$ does not satisfy  \eqref{e5}. Hence $\mathbb{Z}_{25}$ does not induce a pentagonal quasigroup. The group $\mathbb{Z}_5\times\mathbb{Z}_5$ induces 24 pentagonal quasigroups. These quasigroups are induced by matrices 
$$
A\in \left\{\left[
\begin{array}{cc}
	0&1\\4&3
\end{array}	\right],
\left[
\begin{array}{cc}
	0&2\\2&3
\end{array}	\right],
\left[
\begin{array}{cc}
	0&3\\3&3
\end{array}	\right],
\left[
\begin{array}{cc}
	0&3\\1&2
\end{array}	\right],
\left[
\begin{array}{cc}
	0&4\\1&3
\end{array}	\right],
\left[
\begin{array}{cc}
	4&0\\1&4
\end{array}\right]
\right\}
$$
and $-A^2, A^3, -A^4$.

Pentagonal quasigroups of order $31$ are induced by the group $\mathbb{Z}_{31}$. They are determined by an automorphism $\varphi(x)=ax$, where $a\in\{15, 23, 27, 29\}$ (see table below).

From Abelian groups of order $40$ the groups $\mathbb{Z}_{40}$, $\mathbb{Z}_2\times\mathbb{Z}_{20}$ and $\mathbb{Z}_4\times\mathbb{Z}_{10}$ have one or three elements of order $2$, so they cannot induce pentagonal quasigroups. In the group $\mathbb{Z}_2\times\mathbb{Z}_2\times\mathbb{Z}_{10}$ only elements $(0,0,2)$, $(0,0,4)$, $(0,0,6)$, $(0,0,8)$ have order $5$. Thus $\varphi(0,0,2)\in\{(0,0,4), (0,0,6), (0,0,8)\}$. But then $\varphi^5(0,0,2)+(0,0,2)\ne (0,0,0)$, a contradiction. Therefore there are no pentagonal quasigroups of order $40$.

Pentagonal quasigroups of order $41$ can be calculated by solution of the equation \eqref{e7} or \eqref{e8}. The solutions are $a=4,23,25,31$. So there are four such quasigroups.

For $n=45$ there are two Abelian groups: $\mathbb{Z}_{45}$ and $\mathbb{Z}_3\times\mathbb{Z}_{15}$. The first group has two elements of order $3$, so by Proposition \ref{ord} it cannot induce pentagonal quasigroups. The second group has four elements of order $5$. The smallest is $(0,3)$. Thus $\varphi(0,3)=(0,3a)$ for $a=2,3,4$. But then $\varphi^5(0,3)+(0,3)\ne (0,0)$. Thus, pentagonal quasigroups of order $45$ do not exist.

For $n=55$ there exists only one Abelian group: $\mathbb{Z}_{55}$. Its automorphisms have form $\varphi(x)=ax$, where $(a,55)=1$. The automorphisms inducing pentagonal quasigrous should satisfy \eqref{e8}. It is easily to see, that for $k=0,1,2,...,9$ the last digit of $k^5$ is $k$. So, for $a=mk$ the last digit of $a^5$ also is $k$. Since $a^5+1$ must be divided by $5$, $a=m4$ or $a=m9$. 
The above table shows that the smallest possible value of $a$ is $19$. Because $44$ is divided by $11$, $44^5+1$ cannot be divided by $11$. Thus $44$ should be omitted. Also $54=(-1)({\rm mod}\,55)$ should be omitted.
By direct calculation we can see that from other $a<54$ acceptable are $24$, $29$ and $39$.
Hence there are four pentagonal quasigroups of order $55$. They are isomorphic to the direct product of pentagonal quasigroups induced by $\mathbb{Z}_5$ and $\mathbb{Z}_{11}$.

From Abelian groups of order $56$ groups $\mathbb{Z}_{56}$, $\mathbb{Z}_2\times\mathbb{Z}_{28}$, $\mathbb{Z}_4\times\mathbb{Z}_{14}$ have one or three elements of order $2$. Thus they cannot induce pentagonal quasigroups. The group $\mathbb{Z}_2\times\mathbb{Z}_2\times\mathbb{Z}_{14}$ has six elements of order $7$. In the same manner an in the case of groups of order $45$ we can prove that this group cannot induce pentagonal quasigroups. 
 
Pentagonal quasigroups of prime orders $61$ and $71$ can be calculated in the same way as for $n=41$. Results are presented in the table below.

An Abelian group $G$ of order $80$ can be decomposed into the direct product of two groups $H$ and $\mathbb{Z}_5$, where $H$ is a group of order $16$. From groups of order $16$ only $\mathbb{Z}_2^4$ induces pentagonal quasigroups. So, by Proposition \ref{cprod}, from groups of order $81$ only $\mathbb{Z}_2^4\times\mathbb{Z}_5$ induces pentaginal quasigroups. We have $1\,344$ such quasigroups.

The group $\mathbb{Z}_{81}$ has only two elements of order $2$, so, by Proposition \ref{ord}, this group cannot be inducing group for a pentagonal quasigroup. Theorem \ref{T-Hil} shows that from other Abelian groups of order $81$ only the group $\mathbb{Z}_3^4$ can have an automorphism of order $10$. Using a computer software we calculate $303\,264$ of such automorphisms satisfying \eqref{e5}. So, $\mathbb{Z}_3^4$ induces $303\, 264$ pentagonal quasigroups.

\medskip
In this way we have proved the following:
\begin{theorem} The groups of order $n<100$ that induce pentagonal quasigroups are $\mathbb{Z}_{5}$, $\mathbb{Z}_{11}$, $\mathbb{Z}_{2}^4$, $\mathbb{Z}_{5}^2$, $\mathbb{Z}_{31}$, $\mathbb{Z}_{41}$, $\mathbb{Z}_{55}$, $\mathbb{Z}_{61}$, $\mathbb{Z}_{71}$, $\mathbb{Z}_{2}^4\times \mathbb{Z}_{5}$ and $\mathbb{Z}_{3}^4$. 
\end{theorem}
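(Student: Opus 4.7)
The proof is a finite case analysis organized as a narrowing-then-enumerating argument. The plan is first to shrink the list of candidate orders using the necessary conditions already established, and then for each surviving $n$ to enumerate the Abelian groups of order $n$ and decide each in turn.

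For the narrowing step, Theorem \ref{T-fin} restricts to $n\equiv 0,1\pmod 5$. Corollary \ref{C-ord} further removes every $n$ with $v_2(n)\in\{1,2\}$ or with $v_3(n)=1$. Proposition \ref{cprod} gives a final reduction: if $n=n_1 n_2$ with $\gcd(n_1,n_2)=1$, an inducing Abelian group of order $n$ splits as $G_1\times G_2$, and each $G_i$ must itself induce a pentagonal quasigroup. A direct check of \eqref{e7} modulo the primes $7,13,17,19$ yields no solutions, eliminating $n\in\{35,65,85,91,95\}$. The surviving list is exactly $\{5,11,16,25,31,40,41,45,55,56,61,71,80,81\}$, as already recorded in the text preceding the theorem.

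For each surviving $n$ I would enumerate all Abelian groups of order $n$ via the fundamental theorem and apply three tests. First, Theorem \ref{T-Hil}: compute $|\mathrm{Aut}(G)|$; if $10\nmid|\mathrm{Aut}(G)|$, drop $G$ (no room for an order-$10$ regular automorphism). Second, Proposition \ref{ord}: count elements of each nontrivial order and drop $G$ if some order has fewer than four elements. Third, when both tests pass, view an automorphism $\varphi$ of the $p$-primary part as an $\mathbb{F}_p$-linear map and search for one whose minimal polynomial divides $\Phi_{10}(x)=x^4-x^3+x^2-x+1$. For the cyclic prime cases this reduces to solving \eqref{e7}, which works for $n\in\{5,11,31,41,61,71\}$. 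For the positive non-cyclic cases the constructions are explicit: $\Phi_{10}(x)\equiv x^4+x^3+x^2+x+1\pmod 2$ is irreducible over $\mathbb{F}_2$ (the order of $2$ mod $5$ is $4$), so a companion matrix furnishes $\varphi$ on $\mathbb{Z}_2^4$; $\Phi_{10}$ is similarly irreducible over $\mathbb{F}_3$ (the order of $3$ mod $10$ is $4$), yielding $\varphi$ on $\mathbb{Z}_3^4$; and $\Phi_{10}(x)\equiv (x+1)^4\pmod 5$, so on $\mathbb{Z}_5^2$ one takes $\varphi=-\varepsilon+N$ for any nonzero nilpotent $N$ with $N^2=0$. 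The composite cases $\mathbb{Z}_{55}$ and $\mathbb{Z}_2^4\times\mathbb{Z}_5$ follow from Proposition \ref{cprod} by combining solutions on the coprime factors.

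The main obstacle is the bookkeeping in the elimination step for the non-cyclic, non-elementary-Abelian groups. At $n=16$ one must dismiss each of $\mathbb{Z}_{16}$, $\mathbb{Z}_2\times\mathbb{Z}_8$, $\mathbb{Z}_2^2\times\mathbb{Z}_4$, $\mathbb{Z}_4^2$ individually via Theorem \ref{T-Hil}; at $n=40$ one needs a careful element-order count in $\mathbb{Z}_2^2\times\mathbb{Z}_{10}$ to show that no regular automorphism of order $10$ can satisfy \eqref{e5}; at $n=45$ one handles both $\mathbb{Z}_{45}$ (via Proposition \ref{ord}) and $\mathbb{Z}_3\times\mathbb{Z}_{15}$ (via an analogous count on elements of order $5$); at $n=56$ every Abelian group is killed by Corollary \ref{C-ord} or by a similar element-order argument; and at $n=81$ one must split off $\mathbb{Z}_{81}$ (ruled out by the two elements of order $3$) and the mixed rank-$2$ or rank-$3$ groups (ruled out by Theorem \ref{T-Hil}) before reaching the single survivor $\mathbb{Z}_3^4$. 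None of the individual steps is hard, but care is needed that no isomorphism class slips through.
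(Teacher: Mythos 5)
Your proposal follows essentially the same route as the paper's proof: the same narrowing of the candidate orders to $\{5,11,16,25,31,40,41,45,55,56,61,71,80,81\}$ via Theorem \ref{T-fin}, Corollary \ref{C-ord} and Proposition \ref{cprod}, followed by the same group-by-group elimination using Theorem \ref{T-Hil} and Proposition \ref{ord}, with the surviving groups settled by exhibiting an automorphism satisfying \eqref{e5}. Your constructions for $\mathbb{Z}_2^4$, $\mathbb{Z}_3^4$ and $\mathbb{Z}_5^2$ via the reduction of $x^4-x^3+x^2-x+1$ modulo $2$, $3$ and $5$ (irreducible in the first two cases, $(x+1)^4$ in the third) are cleaner and more checkable than the paper's appeal to computer search. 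One case does slip through your bookkeeping: $\mathbb{Z}_{25}$ is never excluded, since it is cyclic but not of prime order and so is not covered by your ``cyclic prime'' clause; it is eliminated either by verifying directly that \eqref{e7} has no solution modulo $25$, or, as the paper does, by restricting $\varphi$ to the four elements of order $5$.
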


For $n<100$ pentagonal quasigroups induced by $\mathbb{Z}_n$ are as follows:
 
$${\small
\begin{array}{|c|c|c|c|c|c|c|c||c} \hline
\rule{0pt}{10pt}n=5&4x+2y&&&\\ \hline
\rule{0pt}{10pt}n=11&2x+10y&6x+6y&7x+5y&8x+4y\\ \hline
\rule{0pt}{10pt}n=31&15x+17y&23x+9y&27x+5y&29x+3y\\ \hline
\rule{0pt}{10pt}n=41&4x+38y&23x+19y&25x+17y&31x+11y\\ \hline
\rule{0pt}{10pt}n=55&19x+37y&24x+32y&29x+27y&39x+17y\\ \hline
\rule{0pt}{10pt}n=61&3x+59y&27x+35y&41x+21y&52x+10y\\[3pt] \hline
\rule{0pt}{10pt}\;\;n=71\;\;&14x+58y&17x+55y&46x+26y&66x+6y\\[3pt] \hline
\end{array}}
$$


\section{Parastrophes of pentagonal quasigroups}\setcounter{section}{6}\setcounter{theorem}{0}

Each quasigroup $(Q,\cdot)$ determines five new quasigroups $Q_i=(Q,\circ_i)$ with the operations $\circ_i$ defined as follows:
$$
\begin{array}{cccc}
x\circ_1 y=z\longleftrightarrow x\cdot z=y\\
x\circ_2 y=z\longleftrightarrow z\cdot y=x\\
x\circ_3 y=z\longleftrightarrow z\cdot x=y\\
x\circ_4 y=z\longleftrightarrow y\cdot z=x\\
x\circ_5 y=z\longleftrightarrow y\cdot x=z\\
\end{array}
$$
Such defined (not necessarily distinct) quasigroups are called {\em parastrophes} or {\em conjugates} of $Q$.

Parastrophes of each quasigroup can be divided into separate classes containing isotopic parastrophes. The number of such classes is always $1$, $2$, $3$ or $6$ (cf. \cite{D'15}). In some cases (described in \cite{Lind}) parastrophes of a given quasigroup $Q$ are pairwise equal. 
Parastrophes do not save properties of the initial quasigroup. Parastrophes of an idempotent quasigroup are idempotent quasigroups, but parastrophes of a pentagonal quasigroup are not pentagonal quasigroups, in general.

Let $(Q,\cdot)$ be a pentagonal quasigroup induced by the group $\mathbb{Z}_n$. Then $x\cdot y =[ax+(1-a)y]_n$ and $[a^4-a^3+a^2-a+1]_n=0$.
Such quasigroup is $k$-translatable for $k=[1-a^3-a]_n$. Since $[a(1-a+a^2-a^3)]_n=1=[(1-a)(a^3+a)]_n$, from Theorems 5.1 and 5.3 in \cite{DM3} we obtain the following characterization of parastrophes of pentagonal quasigroups.

\begin{proposition}\label{ptrans}
If $(Q,\cdot)$ is a pentagonal quasigroup with multiplication $x\cdot y=[ax+(1-a)y]_n$, then its parastrophe

\medskip
$\arraycolsep=.5mm
\begin{array}{rlllcccc}
x\circ_1 y=&[(1-a^3-a)x+(a^3+a)y]_n \ &{\rm is } \ k{\rm -translatable \ for} \ k=a,\\[3pt]
x\circ_2 y=&[-a^4x+(a^4+1)y]_n &{\rm is } \ k{\rm -translatable\ for} \ k=[a^3+a]_n\,,\\[3pt]
x\circ_3 y=&[(a^4+1)x+(-a^4)y]_n &{\rm is } \ k{\rm -translatable\ for} \ k=[1-a]_n\,,\\[3pt]
x\circ_4 y=&[(a^3+a)x+(1-a-a^3))y]_n \;&{\rm is } \ k{\rm -translatable\ for} \ k=[-a^4]_n\,,\\[3pt]
x\circ_5 y=&[(1-a)x+a)y]_n &{\rm is } \ k{\rm -translatable\ for} \ k=[a^4+1]_n\,.
\end{array}
$
\end{proposition}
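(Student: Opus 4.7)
The plan is to invoke the general results on parastrophes of linear quasigroups over $\mathbb{Z}_n$ (Theorems~5.1 and~5.3 of \cite{DM3}), specialised to our quasigroup with coefficients $a$ and $1-a$, and then translate the resulting expressions into the pentagonal form using the two key inversion identities highlighted just above the proposition, namely $[a(1-a+a^2-a^3)]_n=1$ and $[(1-a)(a^3+a)]_n=1$. Thus $a^{-1}\equiv 1-a+a^2-a^3$ and $(1-a)^{-1}\equiv a^3+a$ modulo $n$, both consequences of \eqref{e7}. The other identities I will use freely are $a^5\equiv-1$ (which gives $a^4+a^2\equiv a^3+a-1$ after rearrangement of \eqref{e7}) and higher-power reductions $a^6\equiv -a$, $a^7\equiv-a^2$.

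First I would solve each defining equivalence for $z$. For $\circ_1$, $x\cdot z=y$ gives $z=(1-a)^{-1}(y-ax)=(a^3+a)y-a(a^3+a)x$, and then $-a(a^3+a)=-(a^4+a^2)\equiv 1-a^3-a$ by the rearrangement above. The derivations of $\circ_2,\circ_3,\circ_4$ are analogous, using $a^{-1}\equiv 1-a+a^2-a^3$ in $\circ_2,\circ_3$ and $(1-a)^{-1}\equiv a^3+a$ in $\circ_4$; each requires one application of \eqref{e7} to collapse a quartic expression. For $\circ_5$, no inversion is needed: $y\cdot x$ is simply $(1-a)x+ay$.

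Once the five linear forms $x\circ_i y=\alpha_i x+\beta_i y$ are written down, translatability is checked by Lemma~9.1 of \cite{DM1}: it suffices to verify $\alpha_i+k\beta_i\equiv 0\pmod n$ for the candidate $k$ listed in each row. For example, in $\circ_1$ this reduces to $(1-a^3-a)+a(a^3+a)\equiv 0$, which is exactly \eqref{e7}; in $\circ_5$ it reduces to $(1-a)+(a^4+1)a\equiv 0$, equivalent to $a^5\equiv -1$; the remaining three cases reduce similarly to \eqref{e7} after using $a^5\equiv-1$, $a^6\equiv -a$, or $a^7\equiv -a^2$ to clear high powers.

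The main obstacle is purely bookkeeping: every simplification must be guided by \eqref{e7} in one of its equivalent rearrangements, and one must be careful in $\circ_4$ (where $-a^4(1-a-a^3)$ expands to a degree-$7$ expression, requiring $a^5\equiv -1$ and $a^7\equiv -a^2$ simultaneously before \eqref{e7} collapses the result). Since no genuinely new idea beyond the cited theorems is needed, the proof will be presented as a verification: exhibit the linear form of each parastrophe via the two inverse formulas, and confirm the translatability constant by the $[\alpha+k\beta]_n=0$ criterion, citing \eqref{e7} and its consequence $[a^5+1]_n=0$ at each step.
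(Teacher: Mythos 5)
Your proposal is correct and matches the paper's approach: the paper likewise obtains the five linear forms from Theorems 5.1 and 5.3 of \cite{DM3} via the inverse identities $[a(1-a+a^2-a^3)]_n=[(1-a)(a^3+a)]_n=1$, and the translatability constants follow from the criterion $[\alpha+k\beta]_n=0$ together with \eqref{e7} and $[a^5+1]_n=0$. Your explicit verifications (e.g.\ $(1-a^3-a)+a(a^3+a)\equiv 0$ and the degree-$7$ reduction in $\circ_4$) all check out.
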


Using Proposition \ref{ptrans} we can show for which values of $a$ and $n$ parastrophes of a pentagonal quasigroup with the multiplication $x\cdot y=[ax+(1-a)y]_n$ are pentagonal, quadratical ($xy\cdot x=zx\cdot yz$), hexagonal ($x\cdot yx=y$),  GS-quasigroups ($x(xy\cdot z)\cdot z=y$), ARO-quasigroups ($xy\cdot y=yx\cdot x$), Stein quasigroups ($x\cdot xy=yx$), right modular ($xy\cdot z=zy\cdot x$) and C3 quasigroups ($(xy\cdot y)y=x$).

We start with the lemma that is a consequence of our results proved in \cite{DM3}.

\begin{lemma}\label{Lpara}
Let $(Q,\cdot)$ be a quasigroup of the form $x\cdot y=[ax+(1-a)y]_n$. Then 

\smallskip
$[2a^2-2a+1]_n=0$ \ if it is quadratical $($Theorem $4.8$ in $\cite{DM'16})$, 

\smallskip $[a^2-a+1]_n=0$ \ if it is hexagonal, 

\smallskip $[a^2-a-1]_n=0$ \ if it is a $GS$-quasigroup, 

\smallskip $[2a^2]_n=1$ \ if it is an $ARO$-quasigroup, 

\smallskip $[a^2-3a+1]_n=0$ \ if it is a Stein quasigroup,

\smallskip $[a^2+a-1]_n=0$ \ if it is right modular, 

\smallskip $[a^3]_n=1$ \ if it is a $C3$ quasigroup. 
\end{lemma}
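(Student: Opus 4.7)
The strategy is direct verification: for each of the seven identities, I would substitute the formula $x\cdot y=[ax+(1-a)y]_n$ wherever a product appears, expand to obtain a linear expression in $x,y$ (and $z$, when relevant) with integer coefficients, and then compare with the right-hand side. Because all operations are linear in the variables, each defining identity becomes a system of polynomial congruences in $a$ modulo $n$, one for each variable. In every case I expect the system to reduce to a single quadratic or cubic congruence, namely the one listed in the lemma.

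The short cases I would dispatch first. For the hexagonal identity $x\cdot yx=y$, one computes $yx=ay+(1-a)x$ and then $x\cdot yx=(a+(1-a)^2)x+a(1-a)y=(a^2-a+1)x+(a-a^2)y$, so this equals $y$ for all $x,y$ iff $[a^2-a+1]_n=0$. The ARO identity $xy\cdot y=yx\cdot x$ gives $a^2x+(1-a^2)y=(1-a^2)x+a^2y$, i.e.\ $[2a^2-1]_n=0$. Right modularity $xy\cdot z=zy\cdot x$ reduces immediately to $[a^2+a-1]_n=0$. For $C3$, iterating twice gives $(xy\cdot y)y=a^3x+(1-a^3)y$, so the condition is $[a^3-1]_n=0$.

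The more laborious cases are the quadratical, Stein, and $GS$ identities. For quadraticality, writing out $xy\cdot x$ and $zx\cdot yz$ and matching the coefficients of $x$ and $z$ both yield the single congruence $[2a^2-2a+1]_n=0$. Stein's identity $x\cdot xy=yx$ unfolds to $(2a-a^2)x+(1-a)^2y=(1-a)x+ay$, which gives $[a^2-3a+1]_n=0$ from each variable (using $(1-a)^2-a=a^2-3a+1$). For the $GS$ identity $x(xy\cdot z)\cdot z=y$, two nested substitutions produce
\begin{equation*}
(a^2+a^3-a^4)x+a^2(1-a)^2y+(1-a)(1+a-a^2)z,
\end{equation*}
and the coefficients of $x$ and $z$ both vanish precisely when $[a^2-a-1]_n=0$, while the coefficient of $y$ then equals $a^2(1-a)^2=(a^2-a)^2=1$ by the same congruence.

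The main obstacle is purely clerical: keeping the expansions honest for the two- and three-variable identities and checking that the three coefficient conditions per identity are all equivalent to the single displayed congruence. In each case this equivalence uses only elementary manipulation (completing a square, or substituting the quadratic congruence into a coefficient of higher degree), so no additional structural result beyond Theorem \ref{main} and the Toyoda form \eqref{e4} is required. The quadratical case is already recorded in \cite{DM'16}, which I would cite rather than re-derive.
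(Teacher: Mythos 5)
Your proposal is correct: each of the seven identities, written out via $x\cdot y=[ax+(1-a)y]_n$, reduces to matching coefficients of the free variables, and the resulting congruences (using $(a,n)=(a-1,n)=1$ to cancel the units $a^2$ and $1-a$ in the $GS$ case) are exactly those listed. The paper itself offers no in-text proof of this lemma, deferring instead to results in \cite{DM3} and to Theorem 4.8 of \cite{DM'16}; your direct verification simply supplies the computation that those citations encapsulate, so it is the same method made explicit rather than a genuinely different route.
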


Using the above characterization and the fact that a quasigroup of the form $x\cdot y=[ax+(1-a)y]_n$ is $k$-translatable if and only if $[a+(1-a)k]_n=0$ (cf. \cite{DM1}, \cite{DM2} or \cite{DM3}) we obtain 
\begin{lemma}\label{Ltrans}
A quasigroup of the form $x\cdot y=[ax+(1-a)y]_n$ is  

\smallskip $[1-2a]_n$-translatable if and only if it is quadratical,

\smallskip $[1-a]_n$-transltable if and only if it is hexagonal,

\smallskip $[a+1]_n$-translatable if and only if it is a $GS$-quasigroup,  

\smallskip $[-2a-1]_n$-translatable if and only if if it is an $ARO$-quasigroup, 

\smallskip $[a-1]_n$-translatable if and only if it is a Stein quasigroup,

\smallskip $[-1-a]_n$-translatable if and only if it is right modular.

\smallskip\noindent 
A $C3$ quasigroup is $k$-translatable for $k$ such that $[(1-a^2)k]_n=1$. 
\end{lemma}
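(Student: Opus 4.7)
The whole lemma reduces to a pair of ingredients. The first is the translatability criterion of Lemma 9.1 in \cite{DM1}, already cited just before the statement: a quasigroup $x\cdot y=[ax+(1-a)y]_n$ is $k$-translatable if and only if $[a+k(1-a)]_n=0$. The second is Lemma \ref{Lpara}, which recasts each listed property as a specific polynomial congruence in $a$ modulo $n$. The plan is, for each of the six iff clauses, to substitute the proposed value of $k$ into $a+k(1-a)$ and to verify that the resulting quadratic in $a$ coincides, up to an overall sign (immaterial modulo $n$), with the polynomial from Lemma \ref{Lpara}.

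Concretely, each substitution is a one-line expansion: $k=1-2a$ gives $2a^2-2a+1$ (quadratical); $k=1-a$ gives $a^2-a+1$ (hexagonal); $k=a+1$ gives $-(a^2-a-1)$ (GS); $k=-2a-1$ gives $2a^2-1$, matching $2a^2\equiv 1$ (ARO); $k=a-1$ gives $-(a^2-3a+1)$ (Stein); and $k=-1-a$ gives $a^2+a-1$ (right modular). Since in each case the polynomial produced and the characterizing polynomial of Lemma \ref{Lpara} differ only by a unit factor of $\pm1$, the two congruences vanish modulo $n$ simultaneously, which is the desired equivalence.

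The C3 clause needs a slightly different argument, as it asserts only the ``for'' direction. Assume $[a^3]_n=1$. From the factorization $a^3-1=(a-1)(a^2+a+1)$ and the quasigroup hypothesis $\gcd(a-1,n)=1$ (which is automatic for $x\cdot y=[ax+(1-a)y]_n$) it follows that $[a^2+a+1]_n=0$. Hence $a(1+a)\equiv -1\pmod n$, so $1+a$ is a unit modulo $n$, and therefore so is $1-a^2=(1-a)(1+a)$. Thus the equation $[(1-a^2)k]_n=1$ uniquely determines $k$; substituting the identities $(1+a)\equiv -a^{-1}=-a^2$ and $a^{-2}=a$ into $(1-a)(1+a)k\equiv 1$ yields $k(a-1)\equiv a$, that is, $[a+k(1-a)]_n=0$. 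The translatability criterion then delivers the asserted $k$-translatability.

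The only real obstacle is bookkeeping: keeping signs consistent across the six iff cases, and, in the C3 case, justifying the invertibility of the relevant quantities. Both are controlled by the standing hypotheses $\gcd(a,n)=\gcd(a-1,n)=1$ that make $x\cdot y=[ax+(1-a)y]_n$ a quasigroup to begin with, so no additional input beyond Lemmas \ref{Lpara} and the linear-translatability criterion is needed.
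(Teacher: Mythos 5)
Your proposal is correct and follows exactly the route the paper intends: the paper derives this lemma in one line by combining Lemma \ref{Lpara} with the criterion $[a+(1-a)k]_n=0$, and your substitutions (each matching the characterizing polynomial up to sign) together with the unit argument for the $C3$ case are precisely the omitted verifications. All six expansions and the $C3$ computation check out.
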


Using these two lemmas we can determine properties of parastrophes of pentagonal quasigroups induced by $\mathbb{Z}_n$.
We start with $Q_1$.

$\bullet$ Suppose that $Q_1$ is pentagonal. Then $a = [1-(1-a^3-a)^3-(1-a^3-a)]_n$, from translatability, and $[(1-a^3-a)^2]_n=[-a^2-a-1]_n$, from \eqref{e7}. Then we have $[(1-a^3-a)^3]_n=[(-a^2-a-1) (1-a^3-a)]_n=[a^4+2a^3-2]_n\stackrel{\eqref{e7}}{=}[3a^3-a^2+a-3]_n.$
Therefore, $a=[1-(1-a^3-a)^3-(1-a^3-a)]_n=[-2a^3+a^2+3]_n$, whence, multiplying by $a^2$, we obtain $[a^4-a^3]_n=[-3a^2-2]_n$. This, by \eqref{e7}, shows that $[2a^2+a+1]_n=0$. Multiplying this equation by $a^3$ and applying \eqref{e7} we get $[a^4+a^3]_n=2$. Adding this equation to $[a^4-a^3]_n=[-3a^2-2]_n$ we obtain $[2a^4]_n=[-3a^2]_n$. Thus, $[2a^2]_n=[-3]_n$ and consequently, $[a^4-a^3]_n=[-a^2-2a^2-2]_n=[-a^2+1]_n$. Hence $[a^4-a^3+a^2]_n=1$, which by \eqref{e7} implies $a=2$ and $n=11$.

 $\bullet$ Suppose that $Q_1$ is quadratical. Then, $a=[1-2(1-a^3-a)]_n$ by Lemmas \ref{Lpara} and \ref{Ltrans}. Hence $[2a^3]_n=[1-a]_n$. Also $0=[2(1-a^3-a)^2-2(1-a^3-a)+1]_n=[2a^4-2a-1]_n$. So, $[2a+1]_n=[2a^3a]_n=[(1-a)a]_n=[a-a^2]_n$. Consequently, $[a^2]_n=[-a-1]_n$ and $0=[2a^4-2a^3+2a^2-2a+2]_n=[(2a+1)-(1-a)+2(-a-1)-2a+2]_n=[-a]_n$, contradiction. So, $Q_1$ cannot be quadratical.

$\bullet$ $Q_1$ is never hexagonal. Indeed, $Q_1$ is $a$-translatable and $[a^3+a]_n$-translatable as a hexagonal quasigroup. Hence, $a=[a^3+a]_n$, which implies $[a^3]_n=0$. Thus $0=[a^5]_n=[-1]_n$, a contradiction.

$\bullet$ If $Q_1$ is a $GS$-quasigroup, then $a=[(1-a^3-a)+1]_n$. Hence $[a^3]_n=[2-2a]_n$, $[a^4]_n=[2a-2a^2]_n$, $[-1]_n=[a^5]_n=[2a^2-2a^3]_n=[2a^2+4a-4]_n$, $[2a^2]_n=[3-4a]_n$, $[a^4]_n=[6a-3]_n$. Then $0=[2a^4-2a^3+2a^2-2a+2]_n=[10a-5]_n$. So, $[5a]_n=[10a^2]_n=[15-20a]_n$,
i.e., $[25a]_n=[15]_n$. Thus, $[5a]_n=5$ and $5=[10a]_n=[5a+5a]_n=[10]_n$. Therefore $n=5$ and $x\cdot y=[4x+2y]_5$.

$\bullet$ If $Q_1$ is a $ARO$-quasigroup, then $[2(1-a^3-a)^2]_n=1$, so $[2a^2]_n=[-2a-3]_n$. Also $a=[-2(1-a^3-a)-1]_n$. Thus, $[2a^3]_n=[3-a]_n$, $[2a^4]_n=[3a-a^2]_n$ and $0=[2a^4-2a^3+2a^2-2a+2]_n=[-a^2-4]_n$, i.e., $[a^2]_n=[-4]_n$. Hence $[-8]_n=[2a^2]_n=[-2a-3]_n$ which gives $[2a]_n=5$. So, $[-16]_n=[4a^2]_n=[25]_n$.  Therefore, $n=41$, $a=23$.

$\bullet$ If $Q_1$ is a Stein quasigroup, then $a=[(1-a^3-a)-]_n$. So, $[a^3]_n=[-2a]_n$, $[a^3]_n=[-2a]_n$, $[a^4]_n=[-2a^2]_n$, $[-1]_n=[a^5]_n=[-2a^3]_n$, $a=[2a^4]_n$, $[a^2]_n=[-2]_n$, $a=[2a^4]_n=[8]_n$, $a^3]_n=[-4a^4]_n=[-16]_n$
. Thus, by \eqref{e7}, we obtain $[11]_n=0$. Hence $n=11$ and $a=8$.

$\bullet$ If $Q_1$ is right modular,  then $a=[-1-(1-a^3-a)]_n$. Hence $[a^3]_n=2$, $[a^4]_n=[2a]_n$, $[-1]_n=[a^5]_n=[2a^2]_n$, $a=[-2a^3]_n=[-4]_n$. This by \eqref{e7} implies $n=11$, $a=7$.

$\bullet$ If $Q_1$ is a $C3$ quasigroup, then $1=[(1-(1-a^3-a)^2)a]_n$. Hence $[a^3+a^2+2a-1]_n=0$, which, by \eqref{e7}, gives $[a^4+2a^2+a]_n=0$. So, $[-1+2a^3+a^2]_n=0$. Comparing this equation with $[a^3+a^2+2a-1]_n=0$ we obtain $[a^3]_n=[2a]_n$. So, $[-1]_n=[2a^3]_n=[4a]_n$ 
and $1=[2a^3+a^2]_n=[-1+a^2]_n$. Thus $[a^2]_n=2$ and $[-a]_n=[4a^2]_n=8$. Therefore $2=[a^2]_n=[64]_n$. Consequently, $n=31$, $a=23$.

In other cases the proof is very similar, so we omit it.

\medskip
The result of calculations is presented in the table below.
In this table the intersection of the $ARO$-row with the $Q_3$-column means that for a pentagonal quasigroup $Q$ its parastrophe $Q_3$ is an $ARO$-quasigroup only in the case when $x\cdot y=[14x+58y]_{71}$.
{\small
$$
\begin{array}{|c|c|c|c|c|c|c|c|c|c|c|c|}\hline
&Q&Q_1&Q_2&Q_3&Q_4&Q_5\\[2pt] \hline
\rule{0pt}{10pt}\!\!pentaq.\!\!&&\![2x\!+\!10y]_{11}\! &\!always\!&\!never\!&\![6x\!+\!6y]_{11}\!&\![6x\!+\!6y]_{11}\!\\[2pt] \hline
\rule{0pt}{10pt}\!\!quadrat.\!\!&\![4x\!+\!2y]_5\!&\!never\!&\![4x\!+\!2y]_5\!&\![4x\!+\!2y]_5\!&\!never\!&\![4x\!+\!2y]_5\!\\[2pt] \hline 
\rule{0pt}{10pt}\!\!hexag.\!\!&\!never\!&\!never\!&\!never\!&\!never\!&\!never\!&\!never\!\\[1pt] \hline
\rule{0pt}{10pt}GS&\![8x\!+\!4y]_{11}\!&\![4x\!+\!2y]_5\!&\![7x\!+\!5y]_{11}\!&\![7x\!+\!5y]_{11}\!&\![4x\!+\!2y]_5\!&\![8x\!+\!4y]_{11}\!\\[2pt] \hline
\rule{0pt}{10pt}ARO&\!\![27x\!+\!5y]_{31}\!\!&\!\![23x\!+\!19y]_{41}\!\!&\!\![23x\!+\!9y]_{31}\!\!&\!\![14x\!+\!58y]_{71}\!\!&\!\![25x\!+\!17y]_{41}\!\!&\!\![66x\!+\!6y]_{71}\!\!\\[2pt] \hline
\rule{0pt}{10pt}Stein&\![4x\!+\!2y]_5\!&\![8x\!+\!4y]_{11}\!&\!never\!&\![8x\!+\!4y]_{11}\!&\![7x\!+\!5y]_{11}\!&\![7x\!+\!5y]_{11}\!\\[2pt] \hline
\rule{0pt}{10pt}\!\!r.\, mod.\!\!&\![7x\!+\!5y]_{11}\!&\![7x\!+\!5y]_{11}\!&\!never\!&\![4x\!+\!2y]_5\!&\![8x\!+\!4y]_{11}\!&\![4x\!+\!2y]_5\!\\[2pt] \hline
\rule{0pt}{10pt}C3&\!never\!&\![23x\!+\!9y]_{31}\!&\!never\!&\![23x\!+\!9y]_{31}\!&\![27x\!+\!5y]_{31}\!&\!\![27x\!+\!5y]_{31}\!\!\\[2pt]  \hline
\end{array}
$$
}

\noindent
W.A. Dudek \\
 Faculty of Pure and Applied Mathematics,\\
 Wroclaw University of Science and Technology,\\
 50-370 Wroclaw,  Poland \\
 Email: wieslaw.dudek@pwr.edu.pl\\[4pt]
R.A.R. Monzo\\
Flat 10, Albert Mansions, Crouch Hill,\\ London N8 9RE, United Kingdom\\
E-mail: bobmonzo@talktalk.net

\end{document}